\theoremstyle{plain}
\newtheorem{theorem}{Theorem}[section]
\newtheorem{lemma}[theorem]{Lemma}
\newtheorem{corollary}[theorem]{Corollary}
\newtheorem{statement}[theorem]{Statement}
\newtheorem{proposition}[theorem]{Proposition}
\theoremstyle{definition}
\newtheorem{remark}[theorem]{Remark}
\theoremstyle{remark}
\renewcommand{\o}[1]{\bm{#1}}
\newcommand*{\sumi}{\sum_{i = 1}^n}
\newcommand{\cc}[1]{\mathscr{#1}}
\def\ND{\cc{N}}
\newcommand{\eqdef}{\stackrel{\operatorname{def}}{=}}
\newcommand{\eqset}{\coloneqq}
\renewcommand{\Gamma}{\varGamma}
\renewcommand{\Pi}{\varPi}
\renewcommand{\Sigma}{\varSigma}
\renewcommand{\Delta}{\varDelta}
\renewcommand{\Lambda}{\varLambda}
\renewcommand{\Psi}{\varPsi}
\renewcommand{\Phi}{\varPhi}
\renewcommand{\Theta}{\varTheta}
\renewcommand{\Omega}{\varOmega}
\renewcommand{\Xi}{\varXi}
\renewcommand{\Upsilon}{\varUpsilon}
\DeclareMathOperator*{\aargmin}{Argmin}
\DeclareMathOperator*{\argmin}{argmin}
\let\norm\relax
\DeclarePairedDelimiter{\norm}{\lVert}{\rVert}
\let\abs\relax
\DeclarePairedDelimiter{\abs}{\lvert}{\rvert}
\newcommand{\dd}{\,d}
\newcommand*{\R}{\mathbb{R}}
\renewcommand*{\H}{\mathbb{H}}
\newcommand*{\A}{\mathbb{A}}
\DeclareMathOperator{\diag}{diag}
\DeclareMathOperator{\tr}{tr}
\DeclareMathOperator{\range}{range}
\renewcommand*{\P}{\mathbb{P}}
\DeclareMathOperator{\E}{\mathbb{E}}
\DeclareMathOperator{\var}{Var}
\newcommand{\V}{\mathcal{V}}
\def\CONST{\mathtt{C}}
\newcommand*{\eps}{\varepsilon}
\newcommand*{\lmax}{\lambda_{\max}}
\newcommand*{\lmin}{\lambda_{\min}}
\newcommand*{\diff}{\,d}
\newcommand*{\dT}[2]{\o{d T}_{#1}^{#2}}
\newcommand*{\Id}{\o{Id}}
\renewcommand{\tilde}[1]{\widetilde{#1}}
\newcommand{\Tmw}{T_{\mu}}
\renewcommand{\Id}{\o{I}}
\newcommand{\F}{\mathrm{F}}
\renewcommand{\A}{\o{A}}
\newcommand{\oUpxi}{\o{\Upxi}}
\newcommand{\x}{\mathrm{x}}
\newcommand{\y}{\mathrm{y}}
\renewcommand{\t}{\mathrm{t}}
\newcommand{\bw}{\mathscr{W}}
\title{Multiplier bootstrap for Bures--Wasserstein barycenters}
\author[1]{Alexey Kroshnin\thanks{\texttt{alex.kroshnin@gmail.com}}}
\author[1]{Vladimir Spokoiny}
\author[1]{Alexandra Suvorikova}
\affil[1]{Weierstrass Institute for Applied Analysis and Stochastics}
\date{}
\begin{document}
\maketitle
   
\begin{abstract}
    This study proposes a bootstrap-based method for uncertainty quantification in two important statistical scenarios. First, we approximate the sampling distribution of empirical barycenters under the Bures--Wasserstein metric using a reweighted estimator. Our theoretical results guarantee the accuracy of this approximation and enable the construction of data-driven confidence sets. The methodology is validated through experiments on graph-structured data, including stochastic block models and brain connectomes. Additionally, we compare bootstrap-based confidence sets with the asymptotic confidence sets obtained in \cite{kroshnin2019statistical}, evaluating both their statistical performance and computational complexity.
    
    Second, we investigate the generalized bootstrap framework for $M$-estimators without requiring a specific resampling scheme, thus covering both weighted and resampling methods under mild conditions.
    
    Both contributions rely on a novel Gaussian approximation result for $M$-estimators.
\end{abstract}
    



\section{Introduction}
\label{sec:intro}

A Fr{\'e}chet mean, also known as a barycenter, provides a natural generalization of the average---or centre of mass---to spaces endowed with a particular notion of distance. 
Formally, for a set of points $S_1, \dots, S_n$ lying in a metric space $(\mathcal{S}, d)$, their barycenter is a minimizer of the variance:
\[
B \in \aargmin_{Q\in \mathcal{S}} \sumi d^{2}(Q, S_i).
\]
Replacing the sample $S_1, \dots, S_n$ by a probability measure $\mu$ on $\mathcal{S}$ yields a definition of a \emph{population barycenter}:
\[
B \in \aargmin_{Q\in \mathcal{S}} \int_{\mathcal{S}} d^{2}(Q, S) d \mu(S).
\]

In this work, we consider the space $\mathbf{H}_{++}(d)$ of positive semi-definite (PSD) matrices that is common in many practical scenarios \cite{huang2025towards, maunu2023bures, xu2025wasserstein}. In numerous applications---such as brain-computer interface research or network analysis---computing a barycenter of a set of PSD enhances the interpretability and stability of subsequent analyses \cite{zheng2023barycenter, haasler2024bures}. 
A key ingredient in defining a barycenter is the choice of distance. 
In this work, we focus on the Bures--Wasserstein distance, introduced in \cite{bhatia2018bures}, which is particularly useful for analysis of graph-structured data \citep{haasler2024bures, maretic2022wasserstein, maretic2022fgot} and EEG signal studies \cite{zalles2024an}.


The Bures--Wasserstein distance on the space of $d\times d$ positive semi-definite Hermitian matrices $\H_{+}(d)$ is defined as  
\[
\bw^2(Q, S) \eqdef
\tr(Q) + \tr(S) - 2 \tr\Bigl(\bigl(S^{1/2} Q S^{1/2}\bigr)^{1/2}\Bigr),
\quad
Q,\; S \in \H_{+}(d).
\]
The dual name of the Bures--Wasserstein distance itself indicates a connection with both quantum information theory \cite{bures1969extension} and optimal transport (OT) theory \cite{takatsu2011wasserstein}. In the OT framework, one defines a transportation cost and then seeks the most efficient strategy to transform one probability measure into another. This yields a geometrically meaningful metric that quantifies the distance between probability measures \citep{ambrosio_gradient_2008, villani_optimal_2009, santambrogio2015optimal}. The Bures--Wasserstein distance is closely related to the $2$-Wasserstein distance, one of the most widely used OT distances. Specifically, the $2$-Wasserstein distance between two centred Gaussian measures coincides with the Bures--Wasserstein distance between their covariance matrices \citep{takatsu2011wasserstein, bhatia2018bures}. The $2$-Wasserstein barycenters provide a geometrically meaningful method to average distributions. In particular, the barycenter of Gaussian distributions is itself Gaussian with the covariance matrix equal to the Bures--Wasserstein barycenter of their covariance matrices \cite{agueh2011barycenters}.

We refer to the Fr{\'e}chet mean with respect to the distance $\bw$ as the Bures--Wasserstein barycenter. Let $\H_{++}(d)$ be the space of positive-definite $d\times d$ Hermitian matrices and let $\mathcal{M}_2\bigl(\H_{++}(d)\bigr)$ be the space of non-zero finite Borel measures on $\H_{++}(d)$ with a finite second moment. We define the barycenter mapping from $\mathcal{M}_2\bigl(\H_{++}(d)\bigr)$ to $\H_{++}(d)$ as
\begin{equation}
\label{def:bar_map}
   \mathscr{B} \colon \mu \mapsto B_{\mu} \eqdef \argmin_{Q \in \H_{++}(d)} \int\limits_{\H_{++}(d)} \bw^2(Q, S) \dd \mu(S), 
\end{equation}
Note that $B_\mu$ is unique by Theorem~2.1 in~\citep{kroshnin2019statistical}, ensuring that the mapping $\mathscr{B}$ is well-defined.

In many scenarios, the observed data is random. Consequently, the barycenters computed from such samples are data-driven estimators for the unknown population barycenter. This naturally raises the question of how stable the estimators are and how they fluctuate.

Some classical results apply because the barycenters are $M$-estimators \citep{van2006empirical}. Numerous studies have addressed explicit convergence rates, concentration inequalities, and large deviation results for empirical Wasserstein barycenters \citep{ahidar2020convergence, brunel2024concentration, le2022fast, jaffe2024large}. Some works establish the Central Limit Theorem in various settings \citep{kroshnin2019statistical, carlier2021entropic}.  Limit theorems admit constructing confidence sets. However, in practice, this approach has various limitations, such as small sample size, high computational cost, etc. A fundamentally different mechanism for constructing confidence sets is based on the bootstrap approach. 

Bootstrapping techniques have attracted much attention due to their algorithmic simplicity and computational tractability since their introduction by \citet{efron1979bootstrap}. \citet{spokoiny_zhilova_2015} apply multiplier bootstrap to construct likelihood-based confidence sets.
\citet{chen2020robust} investigate the case of heavy-tailed data. 
\citet{naumov2019bootstrap} validate bootstrap approximation for spectral projectors in the case of Gaussian data.
\citet{cheng2010bootstrap} provides approximation rates for multiplier bootstrap for $M$-estimators in semi-parametric models.
\citet{lee2020bootstrap} propose a resampling procedure for $M$-estimators for non-standard cases.
For more examples, we recommend a survey by~\citet{mammen2012bootstrap}.

The primary goal of this study is to quantify the uncertainty associated with the empirical barycenter. To this end, we introduce two related bootstrap frameworks: a multiplier bootstrap tailored to Bures--Wasserstein barycenters, and the generalized bootstrap for $M$-estimators, and theoretically validate these methods.

\subsection{Contribution of this paper}

The first result uses the reweighted empirical barycenter $B_w$ to approximate the distribution of the Bures--Wasserstein distance $\bw(B,B_n)$, where $B$ is the unknown population barycenter and $B_n$ is its empirical counterpart. Theorem~\ref{thm:bootstrap_bw} shows that under mild assumptions covering a wide spectrum of applications
\begin{equation}
\label{eq:approx_main}
    \sqrt{n}\,\bw(B,B_n) \overset{\mathscr{d}}{\approx} \sqrt{n}\,\bw(B_n,B_w),
\end{equation}
with an approximation error of order $n^{-1/2}$ up to a logarithmic factor. In particular, Corollary~\ref{corr:conf_sets} presents theoretical bounds for data-driven confidence sets for $B_n$. 

The experiments illustrate the practical applicability of the multiplier bootstrap for uncertainty quantification associated with $B_n$. We consider graph-structured data, including a weighted stochastic block model and human brain connectomes—complex networks encoding inter-regional connectivity whose analysis is crucial for understanding brain function and behavior \citep{bullmore2009complex,fornito2016fundamentals}. Section~\ref{sec:comp_eff} shows that in the high-dimensional regime the approximation \eqref{eq:approx_main} is computationally more efficient than the asymptotic approximation proposed in \cite{kroshnin2019statistical}, 
which makes it a more practical choice for applications requiring computationally scalable inference methods, especially in high dimensions.

The second result extends the generalized bootstrap framework of \cite{van1996weak} to $M$-estimators. Theorem~\ref{thm:bootstrap_M} validates the bootstrap approximation of the distribution
$\norm{\theta_* - \theta_\mu},$
where $\theta_*$ is the true population $M$-estimator and $\theta_\mu$ its empirical counterpart. To our knowledge, this is the first result that avoids specifying a particular resampling scheme, relying only on broad conditions on the bootstrapped risk. Of note, this approach covers both weighted and resampling bootstrap techniques.

Theorem~\ref{thm:bootstrap_bw} and Theorem~\ref{thm:bootstrap_M} follow from the Gaussian Approximation Result (GAR) derived for $M$-estimators (Proposition~\ref{prop:gar}). We specify GAR for the barycenters and quantify the convergence in terms of a Kolmogorov-type distance, what extends the CLT result obtained in \cite{kroshnin2019statistical}. We note that under additional regularity assumptions one can use GAR result for Wasserstein distance.

\subsection{Organization of the paper and accepted notations}

The paper is organized as follows. Section~\ref{sec:sub_gauss_intro} introduces the multiplier bootstrap in the Bures--Wasserstein space.
Section~\ref{seq:intro_gen_boot} presents the generalized bootstrap for $M$-estimators. Section~\ref{sec:exp} evaluates the performance of the multiplier boostrap method on both synthetic and real data sets. In addition, we compare the approximations constructed using the multiplier bootstrap with those derived from the asymptotic results of \citet{kroshnin2019statistical} and analyze the computational complexities of both methods. 
Finally, the appendices contain proofs.

Throughout this work, we use the following notation. 
Let $\mathbb{H}(d)$ be the set of all $d \times d$ Hermitian matrices, 
and let $\mathbb{H}_{+}(d)$ and $\mathbb{H}_{++}(d)$ denote the sets of 
positive semi-definite and positive definite Hermitian matrices, respectively.
We use $X$ to denote matrices or vectors, while $\o{X}$
denotes an operator. For any matrix $X$, let $\lambda_{\max}(X)$ and 
$\lambda_{\min}(X)$ be its largest and smallest eigenvalues, respectively. 
We use $\norm{X}$, $\norm{X}_{\mathrm{F}}$, $\norm{X}_{1}$, and $\norm{X}_{\psi_{\alpha}}$ to denote the operator norm, Frobenius norm, 1-Schatten norm, and $\psi_{\alpha}$-Orlicz norm, respectively.
Recall, that the $\alpha$-Orlicz ($\alpha \ge 1$) of a real-valued random variable $X$ is
\[
\norm{X}_{\psi_{\alpha}} \eqdef \inf\left\{t\ge 0:\,\E\psi_{\alpha}\left(\frac{|X|}{t} \right) \le 1\right\},
\quad\text{where}~~
\psi_{\alpha}(x) \eqdef e^{x^{\alpha}} - 1.
\]
If $\norm{X}_{\psi_{1}} < +\infty$, $X$ is sub-exponential, and if $\norm{X}_{\psi_{2}} <  +\infty$, $X$ is sub-Gaussian. The inner product associated with the Frobenius norm is given by $\langle X, Y \rangle$. We define the condition number of a matrix or an operator $X$ as $\kappa (X) \eqdef \norm{X}\: \norm{X^{-1}}$. 
The symbol $\otimes$ denotes the tensor product. Throughout the text, we set $\log(x) \eqdef \max\{1, \ln(x)\}.$ Furthermore, for matrices (or operators) $X$ and $A$, let $r(X, A) \eqdef \norm*{X^{-1/2} A X^{-1/2} - I}$. Finally, $\CONST$ denotes a generic absolute constant whose value may change from instance to instance, and the notation $\overset{\mathrm{d}}{\approx}$ indicates closeness in distribution.

\section{Multiplier bootstrap for Bures--Wasserstein barycenters}
\label{sec:sub_gauss_intro}

Before delving into the multiplier‑bootstrap procedure, we first state the key conditions for our theoretical analysis. Specifically, we assume that the data distribution $P$ and the multiplier (or weight) distribution $W$ are sub-exponential. 
Specifically, we assume that there exists a constant $v_P$ such that
\begin{equation}
\label{asm:subexp}
    \norm*{\tr S}_{\psi_1} \le v_{P}, \quad S\sim P.
    \tag{$P$}
\end{equation} 
The corresponding population barycenter is $B = \mathscr{B}(P)$ with the mapping $\mathscr{B}$ introduced by \eqref{def:bar_map}. Let $S_1, \dots, S_n \in \H_{++}(d)$ be an i.i.d.\ sample from a distribution $P$. The corresponding empirical barycenter is
\begin{equation}
\label{def:B_n}
    B_n \eqdef \mathscr{B}(P_n), 
    \quad P_n \eqdef \frac{1}{n}\sum^n_{i=1}\delta_{S_i}, 
\end{equation}
where $\delta_{S}$ is the Dirac measure at $S$.
To approximate the fluctuation of $B_n$ in the vicinity of the population barycenter $B$, we use the multiplier bootstrap. Given a set of  weights, we construct a reweighted empirical barycenter
$B_w \in \H(d)_{+}$ which admits the following non-asymptotic approximation (as we will show later):
\begin{equation}
\label{eq:goal}
   \sqrt{n} \bw(B, B_n) \overset{\mathrm{d}}{\approx} \sqrt{n}\bw(B_n, B_w). 
\end{equation}
Specifically, let $w_1, \dots, w_n$ be real-valued i.i.d.\ non-negative weights sampled from a sub-exponential distribution $W$, i.e., there exists a constant $v_w$, such that
\begin{equation}
\label{assumption:subgauss}
    \norm*{w - 1}_{\psi_1} \le v_w, 
    \quad
    \E w = \var w = 1, 
    \quad w\sim W.
    \tag{$W$}
\end{equation}
Some specific examples of $W$ are the exponential, the Poisson, or the Bernoulli distributions. 
We set
\[
B_w = \mathscr{B}(P_w), \quad P_w = \frac{1}{n} \sum^n_{i=1} w_i \delta_{S_i}.
\]
Notably, if the weight distribution is Bernoulli or Poisson, it is possible for $\sumi w_i = 0$. In this case the weighted empirical barycenter is degenerated, i.e., $B_w = 0$.

To establish the validity of the approximation \eqref{eq:goal} and to derive non-asymptotic bounds, we rely on the properties of OT maps between positive definite Hermitian matrices. 
An OT map transforms one distribution to another with optimal cost, and in the case of Gaussians it is known to be linear \cite{takatsu2011wasserstein}.
It can be generalized to the Bures--Wasserstein space \cite{bhatia2018bures}, providing a linear transformation of an Hermitian operator related to the Bures--Wasserstein distance.
Specifically, for any $Q, S \in \H_{++}(d)$, we denote
\[
T^{S}_{Q} \eqdef Q^{-1/2} \left(Q^{1/2} S Q^{1/2} \right)^{1/2} Q^{-1/2} \in \H_{++}(d),
\]
so that $T^{S}_{Q} Q T^{S}_{Q} = S$ and $\bw(S, Q) = \tr (T^{S}_{Q} - I) Q (T^{S}_{Q} - I)$.
Furthermore, $T^{S}_{Q}$ is differentiable in the Fr{\'e}chet sense (see Lemma~A.2 by \cite{kroshnin2019statistical}):
\begin{equation}
\label{def:dTQS}
    T^{S}_{Q + X} = T^{S}_{Q} + \dT{Q}{S}(X) + o(\norm{X})
    \quad
    \text{as $\norm{X} \to 0$,} \quad X \in \H(d),
\end{equation}
where $\dT{Q}{Q} \colon \H(d) \to \H(d)$ is a negative semi-definite operator. Appendix~\ref{sec:bw_geometry} presents its explicit form and discusses properties. 

Now we are ready to present the main result of this section.

\begin{theorem}
\label{thm:bootstrap_bw}
    Let Assumptions~\eqref{asm:subexp} and~\eqref{assumption:subgauss} be fulfilled. 
    Let $p_0$ be the probability of observing $w_i = 0$, i.e., $p_0 = \P_{w}\{w_i = 0\}$. 
    Denote
    \begin{gather*}
        \o{\Sigma} \eqdef \E \left(T^{S_1}_{B} - I \right) \otimes \left(T^{S_1}_{B} - I \right),
        \quad \sigma^2_T \eqdef \E \left\|T^{S_1}_B-I \right\|^2_{\F},\\
        \A \eqdef \left(- \frac{1}{2} \dT{B}{B}\right)^{1/2}, \quad 
        \o{F} \eqdef -\E\dT{B}{S_1}, 
        \quad \sigma^2_F \eqdef  \norm*{\E \left(\dT{B}{S_1} - \o{F} \right)^2}.
    \end{gather*}
    Then, there exists a function $\Gamma(n, \t) \ge 0$ such that, with probability at least $1 - \CONST e^{-\t}$, it holds
    \[
    \sup_{z\ge 0} \abs*{
    \P\left\{\sqrt{n}\bw(B_n,B) \le z\right\}
	- \P\left\{ \sqrt{n}\bw(B_w,B_n) \le z ~|~\mu\right\}} \leq \Gamma(n, \t) + p^n_0.
    \]
    Moreover, for sufficiently large $n$ (depending on $\t$), 
    \begin{align*}
        \Gamma(n, \t) \lesssim {}& d^3\sqrt{\frac{C_G}{n}} + \kappa(\A\o{F}^{-1} \o{\Sigma}\o{F}^{-1}  \A)\norm*{\A}^2\norm*{\o{F}^{-1}}^2\sigma^2_T \times \\
        & \times \left(\sqrt{\frac{\hat{C}_\eps}{n}\left( \t + \log \frac{nd}{\hat{C}_\eps}\right)} + \sqrt{\frac{C_T}{n}\left(\t+d^2\right)} \right),
    \end{align*}
    
    where $\kappa(\cdot)$ is the condition number. The constants $\hat{C}_\eps, C_T, C_G > 0$ depend on the distribution.
\end{theorem}

The explicit expressions for the constants $\hat{C}_\eps, C_T, C_G$ are given by~\eqref{def:ce_hat}, \eqref{def:ct}, and~\eqref{def:cg_hat}, respectively. The explicit condition on the sample size $n$ is given by~\eqref{def:size_n}. The proof is in Appendix~\ref{sec:proof_sub_gauss}.

\begin{remark}
    The rate $\frac{d^3}{\sqrt{n}}$ is due to the technique used in the proof of the Gaussian approximation results (see Lemma~\ref{lemma:GAR_subg} and~\ref{lemma:gar_sub_boot}). 
    Specifically, we get $d^3$ instead of $d^{3/2}$ because the results are in the space of $d\times d$ matrices.
\end{remark}

The next trivial corollary guarantees the validity of the bootstrap-based procedure of constructing confidence sets.

\begin{corollary}
\label{corr:conf_sets}
    Let the assumptions of Theorem~\ref{thm:bootstrap_bw} be fulfilled. Then with probability at least $1 - \CONST e^{-\t}$ we have
    \[
    \abs*{\P\left(\sqrt{n} \bw(B, B_n) \le \hat{z}_{\alpha}\right) - \alpha} \le \Gamma(n, \t) + p^n_0,
    \]
    where 
    $
    \hat{z}_{\alpha} \eqdef \inf\left\{z \ge 0:\,\P\left(\sqrt{n}\bw(B_w, B_n) \le z \right) \ge \alpha \right\}.
    $
\end{corollary}

Largely, the validity of Theorem~\ref{thm:bootstrap_bw} follows from the general framework for $M$-estimators presented in Section~\ref{seq:intro_gen_boot}. However, there is a part of the proof specific to the Bures--Wasserstein distance, which we will discuss below. This part relies on the linearization of the Bures--Wasserstein. The OT maps $T^{S_1}_B, \dots, T^{S_n}_B$ play a key role in this linearization. Recall that by construction $\E T^{S_1}_B = I$ and denote 
\[
T_n \eqdef \frac{1}{n} \sumi T^{S_i}_B - I, 
\quad
T_w \eqdef \frac{1}{n} \sumi w_i\left(T^{S_i}_B - I\right). 
\]
Lemma~\ref{lemma:Qw_expansion} claims that for sufficiently large $n$ it holds with high probability that
\begin{align}
\label{eq:lin1}
    &\abs*{\bw(B, B_n) - \norm*{\A \o{F}^{-1} T_n}_{\F}}
	\lesssim \frac{1}{\sqrt{n}} \norm*{\A \o{F}^{-1} T_n}_{\F},\\
    \label{eq:lin2}
    &\abs[\big]{\bw(B_{w}, B_{n}) - \norm{\A \o{F}^{-1} (T_{w} - T_n)}_{\F}} \lesssim \frac{1}{\sqrt{n}} \norm{\A\o{F}^{-1} \left(T_{w} - T_{n} \right) }_{\F} + \frac{1}{\sqrt{n}}\norm{\A}
    \norm{\o{F}^{-1} T_{n}}_{\F},
\end{align}
where $\norm{\cdot}_\F$ denotes Frobenius norm. The deviation bounds on $\norm{\A\o{F}^{-1}T_n}_\F$ and $\norm{\A\o{F}^{-1}(T_w-T_n)}_{\F}$ follow from Assumptions~\eqref{assumption:subgauss} and~\eqref{asm:subexp}, because sub-exponential tail behaviour of $\tr S_i$ implies sub-Gaussian tail behaviour of $T_n$ and $T_w$. The next lemma justifies this fact (the proof is in Appendix~\ref{sec:proof_sub_gauss}).
\begin{lemma}
\label{lemma:aux_behaviour}
    Let Assumption~\eqref{asm:subexp} hold. 
    Then for a fixed $Q \in \H_{++}(d)$ and $S \sim P$
    there exist some constants $v_S, v_T, v_F > 0$, such that 
    $\norm*{\norm{S}^{1/2}}_{\psi_2} \le v_{S}$, $\norm*{\norm[\big]{T^{S}_{Q}}_{\F}}_{\psi_2} \le v_{T}$, 
    $\norm*{\norm[\big]{\dT{Q}{S}}}_{\psi_2} \le v_F$.
\end{lemma}

\section{Generalized bootstrap for \texorpdfstring{$M$}{M}-estimators}
\label{seq:intro_gen_boot}

The multiplier bootstrap for barycenters is a special case of a more general result, generalized bootstrap for $M$-estimators \citep{van1996weak}.
$M$-estimators form a broad class of statistical estimators defined through optimization of a given risk induced by a loss function $\ell$. This formulation provides a unifying framework for a wide range of estimation procedures, including maximum likelihood, least squares, etc. 
Let $\mathbb{X}$ be a measurable space, $H$ be a separable Hilbert space, and $\Theta \subset H$ be a convex closed parameter set. We consider a loss function $\ell \colon \Theta \times \mathbb{X} \rightarrow \R$, measurable in $x$ and l.s.c.\ in $\theta$. The associated $M$-estimator is a (partial) map
\[
\mathcal{M} \colon \mu \mapsto \theta_{\mu} \eqdef \argmin_{\theta \in \Theta} \int\limits_{\mathbb{X}} \ell(\theta, x) \dd \mu(x),  
\]
where $\mu$ is a probability measure on $\mathbb{X}$. Note that
$\theta_\mu$ is, a priori, not unique, so we fix any measurable choice of $\theta_\mu$ (which exists due to the lower semicontinuity of $\ell$).
In practice, $\mu$ is random---for example, a data-driven empirical distribution---so $\theta_\mu$ acts as a plug-in estimator of the unknown true parameter $\theta_* = \mathcal{M}(\mu_*)$. 
The generalized bootstrap provides data-driven bounds controlling the fluctuation of $\theta_\mu$. That is, for a properly selected $\hat{\mu}$ depending on $\mu$, one constructs $\theta_{\hat{\mu}} = \mathcal{M}(\hat{\mu})$, such that there exists a function $\Gamma(\t) > 0$, that with probability $1 - \CONST e^{-\t}$
\begin{equation}
\label{eq:K_distance}
    \sup_{z}\,\abs*{\P\left\{ d(\theta_\mu, \theta_*) \le z\right\} - \P\left\{ d(\theta_{\hat{\mu}}, \theta_\mu) \le z \big| \mu \right\} } \le \Gamma(\t),
\end{equation}
where $d$ is some appropriate distance on $\Theta$.
To show the validity of this bound, it is enough to use two tools---the Taylor decomposition and the Gaussian approximation---and to impose some mild assumptions on the statistical behaviour of the gradient of the loss function $\ell$ and its Hessian. 
In what follows, we assume the $\nabla_{\theta} \ell$ and the Hessian $\nabla^2_{\theta} \ell$ exist (in the Fr{\'e}chet sense) and are measurable. Furthermore, we require that they are integrable with respect to $\mu_*$ and a.s.\ integrable with respect to $\mu$ and $\hat{\mu}$.
We slightly extend the definition of $\mathcal{M}$ by allowing $\mu$ to be any non-negative finite measure on $\mathbb{X}$, not necessarily of unit mass.
Let $\mathcal{M}_{a} \subset \mathcal{M}_{+}(\mathbb{X})$ be the set of measures for which $\theta_\mu$ exists and the integrability assumptions hold.
Fix $\nu \in \mathcal{M}_{a}$. For any $\theta \in \Theta$ we write $\theta_t \eqdef (1-t) \theta_* + t \theta$ and define
\begin{gather}
    \o{D}_{\nu}(\theta) \eqdef \int\limits_{\mathbb{X}}\int\limits_0^1 \nabla_{\theta}^2 \ell(\theta_t, x) \dd t \dd \nu(x),
    \quad 
    \o{F}_\nu(\theta) \eqdef \int\limits_{\mathbb{X}}\nabla^2_{\theta}\ell(\theta, x) \dd \nu(x), \\ 
    g_{\nu}(\theta) \eqdef \int\limits_{\mathbb{X}}\nabla_{\theta}\ell(\theta, x)\dd \nu(x).
\end{gather}
We denote
\begin{equation*}
    \o{F} \eqdef \o{F}_{\mu_*}(\theta_*), \quad
    g_\nu \eqdef g_\nu(\theta_*), \quad \o{D}_\nu \eqdef \o{D}_\nu(\theta_\nu),
    ~~\text{where}~~\theta_\nu \eqdef \mathcal{M}(\nu).
\end{equation*}
We also assume that $\theta_{\nu}$ satisfies the first-order optimality condition:
\begin{equation}
    g_{\nu}(\theta_\nu) = 0, \quad \nu \in \mathcal{M}_{a} .
\end{equation}

The next lemma is the key ingredient for the Gaussian Approximation Result (GAR). It relies on Taylor's decomposition. 
Let $\o{I}$ be the identity operator and denote for any $\o{X} \in \H_{++}(d)$, $\o{Y} \in \H(d)$,
\[
r(\o{X}, \o{Y}) \eqdef \norm{\o{X}^{-1/2} \o{Y} \o{X}^{-1/2} - \o{I} },
\quad
r_\nu \eqdef r(\o{F}, \o{D}_\nu)~~\text{for any}~~ \nu \in \mathcal{M}_{a}.
\]

\begin{lemma}
\label{lemma:aux_g_nu}
    Let $\mu$ and $\nu$ in $\mathcal{M}_{a}$ be such that $r_\mu < 1$, $r_\nu < 1$. 
    Then for any bounded linear operator $\A$ it holds that
    \begin{align*}
        & \norm*{\A (\theta_\mu - \theta_\nu) + \A \o{F}^{-1} \left( g_\mu - g_\nu \right)}  
      \\
      &\le \norm{\A \o{F}^{-1/2}} \left[\frac{r_\mu}{1 - r_\mu} \norm{\o{F}^{-1/2} \left(g_{\mu} - g_{\nu}\right)} + \left(\frac{r_\mu}{1 - r_\mu} + \frac{r_\nu}{1 - r_\nu} \right) \norm{\o{F}^{-1/2} g_\nu}\right]
    \end{align*}
\end{lemma}

\begin{proof}
    At the optimal $\theta_{\mu}$ we have $g_{\mu}(\theta_{\mu}) = 0$, so Taylor's decomposition is written as
    \begin{equation*}
    \label{eq:aux_Taylor}
        g_{\mu}(\theta_\mu) - g_{\mu} = \o{D}_\mu[\theta_\mu - \theta_*], ~~\text{which entails}~~\theta_\mu - \theta_* = - \o{D}^{-1}_\mu g_\mu
    \end{equation*}
    (note that $\o{D}_\mu$ is invertible since $r_\mu < 1$).
    The same holds for $\theta_\nu$.
    This ensures
    \[
    \theta_{\mu} - \theta_{\nu} = - \o{D}^{-1}_{\mu} g_{\mu} + \o{D}^{-1}_{\nu} g_{\nu} 
    = - \o{D}^{-1}_{\mu} \left[g_{\mu} - g_{\nu} \right] + \left[\o{D}^{-1}_{\nu} - \o{D}^{-1}_{\mu} \right] g_{\nu}.
    \]
    By definition of $r(\cdot, \cdot)$ and conditions of the lemma, it holds
    $r(\o{F}^{-1}, \o{D}^{-1}_\mu) \le \frac{r_\mu}{1-r_\mu}$. The same holds for $\nu$. The claim follows.
\end{proof}

Now we are to derive GAR result. Specifically, we will show that for a properly chosen centred Gaussian vectors $G$ and $\hat{G}$ it holds $\norm{\o{A}(\theta_* - \theta_\mu)} \overset{\mathscr{d}}{\approx}\norm{\o{AF}^{-1}G}$ and $\norm{\o{A}(\theta_{\hat{\mu}} - \theta_\mu)} \overset{\mathscr{d}}{\approx}\norm{\o{AF}^{-1}\hat{G} }$. To ensure these approximations, we introduce several assumptions on the statistical behaviour of $\nabla_\theta \ell$ and $\nabla^2_{\theta} \ell$. First, we assume that there exists a Borel set $\mathscr{A}_{t} \subset \mathcal{M}(\mathbb{X})$, such that $\P\{\mu \in \mathscr{A}_{t} \} \ge 1 - \CONST e^{-t}$.
All assumptions on the bootstrap measure hold on the event $\{\mu \in \mathscr{A}_{t}\}$. We assume that there exist functions $\eps_F(\x)>0$ and $\hat{\eps}_F(\x, \t)>0$, such that 
\begin{align}
\label{asm:r}
    &\P \left\{ r_\mu \le \eps_F(\x) \right\} \ge 1 - \CONST e^{-\x}, ~~~~\text{and}~~~~ 
    \P \left\{
    r_{\hat{\mu}}
    \le \hat{\eps}_F(\x, \t) \bigm| \mu \right\} \ge 1 - \CONST e^{-\x},
\end{align}
with $\mu \in \mathscr{A}_{t}$.
Next, we assume that there exist centred Gaussian vectors $G\sim \ND(0, \o{\Sigma})$ and $\hat{G} \sim \ND(0, 
\hat{\o{\Sigma}})$ (with $\hat{\o{\Sigma}}$ depending on $\mu$), such that for some $\eps_G > 0$ and for some $\hat{\eps}_G(\t) > 0$ it holds
\begin{align}
\label{asm:gar_1}
    &\sup_{C\in \mathscr{C}}\,\abs*{\P\left\{g_{\mu} \in C \right\} -  \P\left\{ G \in C \right\}} \le \eps_G\\
    \label{asm:gar_2}
    &\sup_{C\in \mathscr{C}}\,\abs*{\P\left\{g_{\hat{\mu}} - g_{\mu}\in C \big| \mu \right\} -  \P\left\{ \hat{G} \in C \Bigm| \mu \right\}} \le \hat{\eps}_G(\t), \quad \mu \in \mathscr{A}_t,
\end{align}
where $\mathscr{C}$ is some class of measurable subsets of $\Theta$. Finally, we introduce the following term. Let $\o{K}$ be a positive semidefinite Hilbert--Schmidt operator, we denote
\[
\varkappa(\o{K}) \eqdef \Bigl[\sqrt{\left(\tr(\o{K}^2) - \lmax(\o{K}^2) \right) \tr(\o{K}^2)} \Bigr]^{-1/2},
\quad 
\gamma(\o{K}) \eqdef \varkappa(\o{K})\tr(\o{K}).
\]

\begin{proposition}[GAR for $M$-estimators]
\label{prop:gar}
    Let the above assumptions hold. 
    Fix a continuously invertible operator $\A$ and denote
    \[
    \o{K} = \A \o{F}^{-1}\o{\Sigma}\o{F}^{-1}\A^*,
    \quad
    \hat{\o{K}} = \A \o{F}^{-1}\hat{\o{\Sigma}}\o{F}^{-1}\A^*.
    \]
    Then for any $\x >0$ such that $\eps_{F}(\x) \le 1/2$ it holds
    \begin{align}\label{eq:bound_GAR_I_condition}
        &\sup_{z> 0 }\abs*{\P\left\{\norm{\A(\theta_\mu - \theta_*)} > z \right\} - \P\left\{ \norm*{\o{AF}^{-1}G} > z \right\} } \\
        & \le \eps_G + \CONST e^{-\x} + \CONST \gamma(\o{K})\cdot \kappa(\o{AF}^{-1/2})\eps_{F}(\x).
    \end{align}
    Moreover, for any $\x, \t > 0$ such that $\hat{\eps}_{F}(\x, \t)\le 1/2$ and $\mu \in \mathscr{A}_{\t}$ such that $r_\mu \le 1/2$, we get
    \begin{align}\label{eq:bound_GAR_II_condition}
        &\sup_{z> 0} \abs*{\P\left\{\norm{\A(\theta_\mu - \theta_{\hat{\mu}})} > z \middle| \mu\right\} - \P\left\{ \norm*{\o{AF}^{-1}\hat{G} } > z \middle| \mu \right\} } \le \hat{\eps}_G(\t) + \CONST e^{-\x} \nonumber \\
        & + \CONST \gamma(\hat{\o{K}}) \kappa(\o{AF}^{-1/2})
        \Bigl[
        \hat{\eps}_{F}(\x, \t) + \frac{1}{\sqrt{\tr(\hat{\o{K}})}} (r_\mu + \hat{\eps}_F(\x, \t))\norm*{\o{A F}^{-1} g_\mu}
        \Bigr].
    \end{align}
\end{proposition}

This proposition follows directly from the general GAR.
\begin{lemma}[General GAR]
\label{lemma:GAR_general}
    Let $X, Y \in \R_+$ be random variables satisfying the following assumptions: 
    there exist constants 
    $m, \delta > 0$, $\rho \in \left[0, \frac{1}{2}\right]$ such that
    \begin{equation}
        \label{eq:comp_aux}
        \P\left(\abs{X - Y} \le \rho Y + m \right) \ge 1 - \delta ;
        \tag{GAR-I}
    \end{equation}
    there exists a centred Gaussian vector $G \sim \ND(0, \o{K})$ taking values in a Hilbert space $H$, and a constant $\Delta \in (0, 1)$, such that
    \begin{equation}
        \label{eq:gar_aux}
        \sup_{z > 0} \abs*{\P\left\{Y \le z\right\} - \P\left\{\norm{G}_H \le z\right\}} \le \Delta,
        \tag{GAR-II}
    \end{equation}
    with $\norm{\cdot}_H$ denoting the norm induced by the scalar product in $H$. 
    Then
    \[
    \sup_{z > 0} \abs*{\P\left\{X \le z\right\} - \P\left\{\norm{G}_H \le z\right\}} 
    \le \Delta + \delta + \CONST \gamma(\o{K}) \left(\frac{m}{\sqrt{\tr(\o{K})}} + \rho \right),
    \]
    with $\gamma(\o{K})$ defined by~\eqref{def:gamma0}.
\end{lemma}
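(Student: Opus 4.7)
The plan is to convert the coupling \eqref{eq:comp_aux} into CDF sandwich bounds, substitute the Gaussian law via \eqref{eq:gar_aux}, and finally control the residual gap by anti-concentration for $\norm{G}_H$.

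First, on the event $E \eqdef \{\abs{X - Y} \le \eta Y + m\}$, which has $\P(E) \ge 1-\delta$, the inequalities $(1-\eta) Y - m \le X \le (1+\eta) Y + m$ hold, so for every $z > 0$ and $z_\pm \eqdef (z \pm m)/(1 \mp \eta)$ one has $\{X \le z\} \cap E \subseteq \{Y \le z_+\}$ and $\{Y \le z_-\} \cap E \subseteq \{X \le z\}$. Combined with $\P(E^c) \le \delta$ and \eqref{eq:gar_aux} applied at $z_\pm$, this yields
\[
\sup_{z > 0} \abs{\P(X \le z) - \P(\norm{G}_H \le z)} \;\le\; \Delta + \delta + \sup_{z > 0} \P\bigl(z_- < \norm{G}_H \le z_+\bigr).
\]

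Next I would invoke the anti-concentration result of \cite{gotze2017gaussian}, which says that the density of $\norm{G}_H^2$ is bounded uniformly by $\CONST\, \varkappa(\o{K})$. Writing $\P(z_- < \norm{G}_H \le z_+) = \P(z_-^2 < \norm{G}_H^2 \le z_+^2)$ and using the elementary bound $z_+^2 - z_-^2 \le \CONST (z \eta + m)(z + m)$ valid for $\eta \le 1/2$, this gives
\[
\P\bigl(z_- < \norm{G}_H \le z_+\bigr) \;\le\; \CONST\, \varkappa(\o{K}) \bigl(z^2 \eta + z m + m^2 \bigr).
\]

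The final bound follows after making the previous display uniform in $z$. The conclusion of the lemma is vacuous unless both $\gamma(\o{K})\eta$ and $\gamma(\o{K}) m/\sqrt{\tr(\o{K})}$ are at most a small absolute constant, so one may assume in particular that $m \le \sqrt{\tr(\o{K})}$. For $z \le 2\sqrt{\tr(\o{K})}$, substituting $z^2 \le 4 \tr(\o{K})$ and $zm \le 2\sqrt{\tr(\o{K})}\,m$ directly yields the target rate $\CONST\, \gamma(\o{K})\bigl(\eta + m/\sqrt{\tr(\o{K})}\bigr)$. For $z > 2\sqrt{\tr(\o{K})}$ I would instead dominate $\P(z_- < \norm{G}_H \le z_+)$ by $\P(\norm{G}_H > z_-)$ and apply a Gaussian quadratic-form tail bound of Laurent--Massart type, which is exponentially small in $z - \sqrt{\tr(\o{K})}$ and thus absorbed by the target rate.

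The main obstacle is this last uniformization step: the anti-concentration estimate from Step 3 naturally carries a factor of $z^2$, which is harmless in the bulk regime $z \sim \sqrt{\tr(\o{K})}$ where $\norm{G}_H$ concentrates, but has to be traded for a tail bound once $z$ exceeds the concentration scale of $\norm{G}_H$. All other steps --- the deterministic sandwich, the GAR substitution, and the invocation of the density bound on $\norm{G}_H^2$ --- are routine once the Gotze-type anti-concentration for Gaussian quadratic forms is in hand.
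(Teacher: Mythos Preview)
Your sandwich and Gaussian substitution (Steps 1--2) match the paper, but the uniformization in $z$ (Step 4) has a genuine gap. The tail bound $\P(\norm{G}_H > z_-)$ is a fixed number once $z$ sits at the threshold $2\sqrt{\tr(\o{K})}$: with $m \le \sqrt{\tr(\o{K})}$ and $\eta \le 1/2$ you only get $z_- \gtrsim \sqrt{\tr(\o{K})}$, so Laurent--Massart yields at best $\exp\bigl(-c\,\tr(\o{K})/\norm{\o{K}}\bigr)$, which does \emph{not} shrink as $m,\eta \to 0$. But the target $\CONST\,\gamma(\o{K})\bigl(\eta + m/\sqrt{\tr(\o{K})}\bigr)$ can be arbitrarily small, so the tail piece cannot be ``absorbed by the target rate'' as you claim. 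Pushing the threshold higher only inflates the constant in the bulk estimate without curing the tail.

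The paper avoids the $z$-dependence altogether by handling the multiplicative distortion through \emph{Gaussian comparison} rather than anti-concentration. For the $\eta$ factor it compares $G$ with $\alpha G$, $\alpha = 1\pm\eta$, via the bound $\sup_z\abs{\P(\norm{G}_H\le z)-\P(\norm{\alpha G}_H\le z)}\le \CONST\,\varkappa(\o{K})\abs{1-\alpha^2}\tr(\o{K})\le \CONST\,\gamma(\o{K})\eta$, which is uniform in $z$. For the additive $m$ shift it introduces a free parameter $h$ and splits $\{x\le\norm{G}_H\le x+\eps\}$ into an additive piece on the squared scale of width $O(h\eps)$ (handled by the density bound) and a multiplicative piece with ratio $1+\eps/h$ (handled by the same Gaussian comparison, hence $z$-free); optimizing at $h=\sqrt{\tr(\o{K})}$ gives $\P\{x\le\norm{G}_H\le x+\eps\}\le \CONST\,\gamma(\o{K})\,\eps/\sqrt{\tr(\o{K})}$ uniformly in $x$. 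This is the idea you are missing: convert the problematic linear-in-$z$ width into a multiplicative one and use covariance comparison, not a tail cutoff.
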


The proof is in the Appendix~\ref{sec:main_gauss_approx}. The proposition follows directly.

\begin{proof}[Proof of Proposition~\ref{prop:gar}]
    To obtain the first result, we use Lemma~\ref{lemma:aux_g_nu}, setting $\nu = \mu_*$.
    This yields
    \begin{align*}
    \norm*{\A(\theta_\mu - \theta_*) + \A\o{F}^{-1} g_\mu} &\le \norm{\A\o{F}^{-1/2}} \cdot \frac{r_\mu}{1 - r_\mu} \cdot\norm{\o{F}^{-1/2}g_{\mu}} \\
    &\le \kappa(\o{AF}^{-1/2}) \cdot \frac{r_\mu}{1 - r_\mu} \cdot \norm{\o{AF}^{-1}g_{\mu}}.
    \end{align*}
    Taking into account Assumption~\eqref{asm:r}, we get that with high probability $r_\mu \le \eps_{F}(\x)$. Moreover, the assumption of the lemma ensures $\eps_{F}(\x) \le 1/2$. Thus, with probability at least $1-\CONST e^{-\x}$, we get
    \[
    \abs*{\norm{\A(\theta_\mu - \theta_*)} - \norm{\o{AF}^{-1} g_\mu}} \le \CONST \eps_{F}(\x) \norm{\o{AF}^{-1}g_\mu}.
    \]
    This ensures the validity of Assumption~\eqref{eq:comp_aux}. To validate \eqref{eq:gar_aux}, we consider Assumption \eqref{asm:gar_1} with a particular choice of $\mathscr{C}$. Specifically, we choose $\mathscr{C}$ to be a set of elliptical sets
    $
    \mathscr{C} \eqdef \Bigl\{ \left\{\eta \in \R^d \, : \, \norm*{\o{AF}^{-1} \eta} \le z \right\}  \, \Bigm| \, z> 0 \Bigr\}.
    $
    The claim follows.
    The proof of the second result is similar.
\end{proof}

The key theorem ensuring \eqref{eq:K_distance} relies on two more assumptions. First, to use Gaussian comparison, we assume that there exists a function $\eps_{K}(\x)$ such that for $\o{K}$ and $\hat{\o{K}}$ from Proposition~\ref{prop:gar}, it holds that
\begin{equation}
\label{asm:upxi}
   \P \left\{ \norm{\o{K} - \hat{\o{K}} }_1 \le \eps_{K}(\x) \right\} \ge 1-\CONST e^{-\x},
\end{equation}
where $\norm{\cdot}_1$ is $1$-Schatten norm. Finally, we assume that
\begin{equation}
\label{asm:concentration_g}
    \P \left\{\norm{\o{AF}^{-1}g_\mu} \le \eps_g(\x) \right\} \ge 1 -\CONST e^{-\x}.
\end{equation}

\begin{theorem}
\label{thm:bootstrap_M}
    Let $\t > 0$ be such that
    \begin{equation}
    \label{eq:bound_on_eps_K}
     \eps_{K}(\t) \le \frac{\tr(\o{K}^2) - \lmax(\o{K}^2)}{4\norm{\o{K}}}.
    \end{equation}
     With probability at least $1-\CONST e^{-\t}$
    \begin{align*}
        &\sup_{z> 0} \abs*{\P\left\{\norm{\A(\theta_\mu - \theta_{\hat{\mu}})} > z \middle| \mu\right\} - \P\left\{ \norm*{\A(\theta_\mu - \theta_*) } > z  \right\} } 
        \lesssim \eps_G + \hat{\eps}_G(\t) + \eps_K(\t)\\
        &+\inf_{\x \in \hat{\mathcal{X}}}  \Bigl[ e^{-\x} +
        \gamma({\o{K}}) \kappa(\o{AF}^{-1/2}) \Bigl[
        \hat{\eps}_{F}(\x, \t) + \frac{1}{\sqrt{\tr({\o{K}})}} (\eps_{F}(\t) + \hat{\eps}_F(\x, \t)) \eps_g(\t)
        \Bigr]
        \Bigr]\\
        &+ \inf_{\x \in \mathcal{X}}\Bigl[e^{-\x} + \gamma(\o{K})\cdot \kappa(\A \o{F}^{-1/2})\eps_{F}(\x)\Bigr],
    \end{align*}
    where $  \hat{\mathcal{X}} = \left\{\x:\, \hat{\eps}_{F}(\x, \t) \le 1/2 \right\}, 
    \quad
    \mathcal{X} = \left\{\x:\, \eps_{F}(\x) \le 1/2 \right\}.$
\end{theorem}

\begin{proof}
First, we consider~\eqref{eq:bound_GAR_II_condition}.
Assumptions~\eqref{asm:concentration_g} and~\eqref{asm:r} ensure that with probability at least $1-\CONST e^{-\t}$ it holds that
\[
\norm{\o{AF}^{-1}g_\mu} \le \eps_g(\t), 
\quad
r_\mu \le \eps_{F}(\t).
\]
Moreover, Lemma~\ref{lemma:bounds_kappa} and the bound \eqref{eq:bound_on_eps_K} ensure
$
\varkappa( \hat{\o{K}}) \le 2 \varkappa\left(\o{K} \right),
\quad 
\tr (\hat{\o{K}}) \le \frac{5}{4} \tr \left(\o{K} \right).
$
Thus, with probability at least $1-\CONST e^{-\t}$, we get
\begin{align}
\label{eq:bound_GAR_II_condition_upd}
    &\sup_{z> 0 }\abs*{\P\left\{\norm{\A(\theta_\mu - \theta_{\hat{\mu}})} > z \middle| \mu\right\} - \P\left\{ \norm*{\o{AF}^{-1}\hat{G} } > z \middle| \mu \right\} } \le \hat{\eps}_G(\t) + \CONST e^{-\x} \nonumber \\
    & + \CONST \gamma(\o{K}) \kappa(\o{AF}^{-1/2})
    \Bigl[
    \hat{\eps}_{F}(\x, \t) + \frac{1}{\sqrt{\tr(\o{K}})} (\eps_{F}(\t) + \hat{\eps}_F(\x, \t))\eps_g(\t)
    \Bigr].
\end{align}
Now, we recall \eqref{eq:bound_GAR_I_condition}. To get the result of the theorem, we have to bound 
\[
\abs*{
\P\left\{\norm*{\A \o{F}^{-1}G}_{\F} \le z\right\} - \P\left\{\norm*{\A \o{F}^{-1}\hat{G} }_{\F} \le z ~|~\mu\right\}}.
\]
Recall that $\A$ is self-adjoint. Corollary~2.3 by~\citet{gotze2017gaussian} ensures
\begin{multline*}
    \label{eq:4}
    \sup_{z\ge 0}\abs*{\P\left\{\norm*{\A \o{F}^{-1}G }_{\F} \le z\right\} - \P\left\{\norm*{\A \o{F}^{-1}\hat{G} }_{\F} \le z~|~\mu\right\}} \\
    \le \CONST 
    \left(\varkappa(\o{K}) + \varkappa(\hat{\o{K}})\right) 
    \norm*{\o{K} - \hat{\o{K}}}_1 \lesssim \varkappa(\o{K}) \eps_{K}(\t), 
\end{multline*}
where the last inequality holds with probability at least $1-\CONST e^{-\t}$.
To get the result, we combine this bound with \eqref{eq:bound_GAR_I_condition} and \eqref{eq:bound_GAR_II_condition_upd}.   
\end{proof}

Finally, to get an approximation for a suitably chosen distance $d$, one can use linearization.
The following scheme illustrates sketches a proof of \eqref{eq:K_distance},
\begin{align*}
    d(\theta_{*}, \theta_{\mu}) & \underset{(1)}{\overset{\mathrm{d}}{\approx}} \norm{\A (\theta_* - \theta_\mu)}_\F \underset{(2)}{\overset{\mathrm{d}}{\approx}} \norm{\o{AF}^{-1} G}_{\F} \underset{(3)}{\overset{\mathrm{d}}{\approx}} \norm{\o{AF}^{-1} \hat{G} }_{\F} \\
    & \underset{(2)}{\overset{\mathrm{d}}{\approx}} \norm{\A (\theta_\mu - \theta_{\hat{\mu}})}_{\F} \underset{(1)}{\overset{\mathrm{d}}{\approx}} d(\theta_\mu, \theta_{\hat{\mu}}).
\end{align*}
Step (1) is the linearization depending on the particular choice of the distance $d$. An example is the linearization of the Bures--Wasserstein distance in \eqref{eq:lin1} and~\eqref{eq:lin2}. Step (2) is GAR relying on Assumptions~\eqref{asm:r}, \eqref{asm:gar_1}, and~\eqref{asm:gar_2}. Step (3) is the Gaussian comparison result relying on Assumption~\eqref{asm:upxi}. Appendix~\ref{sec:boot_validity} illustrates the ideas developed in this section introducing the generalized bootstrap in the Bures--Wasserstein space.

\section{Experiments on graph-structured data}
\label{sec:exp}

The aim of this section is twofold. First, drawing on the ideas from \citet{haasler2024bures}, we demonstrate how the multiplier bootstrap performs on both synthetic and real graph-structured data, specifically related to brain connectomes. Second, we compare the approximating distribution constructed via multiplier bootstrap with the asymptotic distribution presented in Corollary~2.1 of the work \cite{kroshnin2019statistical}. The code supporting the experiments is available at \url{https://github.com/asuvor/bw_paper/}.

For completeness, we briefly recall the concept of asymptotic confidence sets. Corollary~2.1 in \cite{kroshnin2019statistical} ensures that
\begin{equation}
\label{eq:asm_conf_sets}
    \sqrt{n}\bw(B_{n}, B) \overset{\mathrm{d}}{\approx} \norm*{B^{1/2}_{n} \dT{B_n}{B_n}(Z)}_{\F}, ~~\text{as}~~n\rightarrow \infty,
\end{equation}
where $Z \sim \ND(0, \o{\Upxi}_n)$ with $\o{\Upxi}_n \eqdef \o{F}^{-1}_n \o{\Sigma}_n \o{F}^{-1}_n$ and 
\begin{equation}
\label{eq:F_n}
    \o{F}_n \eqdef -\frac{1}{n} \sumi \dT{B_n}{S_i}, \quad \o{\Sigma}_n \eqdef \frac{1}{n} \sumi\left(T^{S_i}_{B_n} - I \right) \otimes \left(T^{S_i}_{B_n} - I \right).
\end{equation}
To get an asymptotic $\alpha$-confidence set for the true barycenter $B$ from \eqref{eq:asm_conf_sets}, one uses the  quantile
\[
\tilde{z}^{\alpha} \eqdef \inf \left\{z > 0:\, \P\left(\norm*{B^{1/2}_{n} \dT{B_n}{B_n}(Z)}_{\F} \le z \right) \ge \alpha\right\}.
\]

\subsection{Computational complexity}
\label{sec:comp_eff}

We use the iterative algorithm proposed by~\cite{alvarez2016fixed} to compute the barycenters. 
It starts from $Q_0 \in \H_{++}(d)$, and its iteration is given by 
\begin{align}
\label{def:iterative_algorithm}
    Q_{k+1} = f_\mu[Q_k] \eqdef{}& (T_\mu[Q_k] + I) Q_k (T_\mu[Q_k] + I) \in \H_{++}(d), \nonumber \\
    T_{\mu}[Q] \eqdef{}& \int\limits_{\H_{++}(d)} (T^S_{Q} - I) \dd \mu(S),
\end{align}
where $\mu$ is a fixed probability measure on $\H_{++}(d)$ with finite second moment.
The computational complexity of the algorithm can be estimated as follows. 

\begin{theorem}[Computational complexity]
\label{thm:complexity}
    Let $\mu$ be a fixed probability measure on $\H_{++}(d)$ with finite second moment. Denote
    \[
    \V_\mu(Q) \eqdef \min_{Q \in \H_{++}(d)} \int\limits_{\H_{++}(d)} \bw^2(Q, S) \dd \mu(S).
    \]
    Set
    $
    \rho_\mu \eqdef \frac{1}{2\kappa^{7/2}_\mu}.
    $
    For all $k = 0, \dots, N$
    \[
    \V_\mu(Q_k) - \V_\mu(B_\mu) \le (1 - \rho_\mu)^k \left(\V_\mu(Q_0) - \V_\mu(B_\mu)\right).
    \]
    Moreover, for the given precision $\eps>0$ it is enough to make $k \ge N$ steps with
    \[
    N = \frac{1}{\rho_\mu} \ln
    \left(
    \frac{1}{\eps}\cdot \frac{2\kappa^{1/2}_\mu}{\lmin(\o{F}_{\mu})}\left(\V_\mu(Q_{0}) - \V_\mu(B_\mu)   \right)^{1/2}  
    \right), 
    \quad 
    \o{F}_{\mu} \eqdef -\int\limits_{\H_{++}(d)} \dT{B_\mu}{S}\dd \mu(S).
    \]
\end{theorem}

Appendix~\ref{appx:computation} contains the proof.

\begin{remark}
    The paper~\cite{altschuler2021averaging} investigates the convergence rate of \eqref{def:iterative_algorithm} in the OT setting, i.e., only for real-valued matrices. In this case the contraction constant $\rho_\mu = \frac{1}{\kappa^{3/2}_{\mu}}$. Upon closer examination of the proof techniques used in this paper, one can conjecture that the complex-valued case reduces to the real-valued one, in which case the factor $\rho$ would be improved. Nevertheless, this hypothesis requires verification. 
\end{remark}

\paragraph{Complexity of bootstrap approximation.} Let $I$ denote the average number of iterations in the iterative algorithm. Theorem~\ref{thm:complexity} provides an upper bound on $I$. However, in practice, the iterative algorithm requires fewer steps.
Computing the barycenter of $n$  matrices requires $O(I \cdot n \cdot \mathcal{K}(d))$ operations, where $\mathcal{K}(d)$ is the complexity of matrix operations (matrix inversion and matrix square root computations). The best-known complexity for matrix inversion is approximately $O(d^{2.38})$. Moreover, to the best of our knowledge, the complexity of computing the square root of a matrix is $O(d^3)$. Therefore, $K(d) = O(d^3)$, resulting in a total computational complexity of $O(I \cdot n \cdot  d^3)$. Thus, the computational complexity of the multiplier bootstrap is $O(M\cdot I \cdot n \cdot d^3)$, where $M$ is the number of resamplings.

\paragraph{Complexity of asymptotic approximation.} To measure the computational complexity of estimating the asymptotic distribution, we note that the operator $\dT{B}{S}(X)$ admits an explicit representation; see Lemma~A.2 by \cite{kroshnin2019statistical}. Specifically, computing each entry in its matrix representation requires $O(d^2)$ operations. Since the space dimension is $\frac{d (d+1)}{2}$, the total complexity of constructing the matrix representation is $O(d^2 \cdot(d^2)^2) = O(d^6)$. Therefore, computing the representation of $\o{F}_n$ (see \eqref{eq:F_n}) requires $O(n \cdot d^6)$ operations. Additionally, sampling a Gaussian matrix $Z$ involves $O(d^4)$ operations. Thus, the total complexity can be estimated as $O(n\cdot d^6 + M \cdot d^4)$, where $M$ is the number of resamplings. Therefore, for large $d$, estimating the asymptotic distribution can be significantly more resource-intensive compared to the bootstrap method.

\subsection{Bures--Wasserstein barycenters of graphs}
\label{sec:bw_graphs}

\cite{haasler2024bures} proposed a novel framework for defining and computing the mean of a set of graphs using the Bures--Wasserstein distance. In the following, we adhere to this setting and present it for completeness.

The authors focus on aligned graphs, meaning graphs with the same number of nodes, with each node corresponding to a specific node in the other graphs. For instance, each vertex might represent a specific area of the head where an electrode is placed to capture EEG signals. Section~\ref{sec:experiments_on_real_data} introduces this setting in more detail.

Let $G$ be an undirected weighted graph with $d$ nodes without self-loops. 
In the following, we assume the weights to be positive.
The adjacency matrix and degree matrix of $G$ are denoted as $A_G$ and $D_G$, respectively.
The graph Laplacian of $G$ is defined as $L \eqdef D_G - A_G.$
Denote by $\mathscr{G}(d)$ the set of aligned positive-weighted and connected graphs with $d$ nodes. The Bures--Wasserstein distance between $G_1 \in \mathscr{G}$ and $G_2 \in \mathscr{G}(d)$ is
\[
\bw_{\mathscr{G}}(G_1, G_2) = \bw(L^{\dag}_{1}, L^{\dag}_{2}),
\]
where $L^{\dag}_{1}$, $L^{\dag}_{2}$ are the pseudo-inverses of their graph Laplacians.

Consider a population of graphs $G_1, \dots, G_n \in \mathscr{G}(d)$. Let the corresponding graph Laplacian be $L_1, \dots, L_n$. 
The authors reduce the problem of finding the barycenter of the graphs to the problem of finding the barycenter of their inverted graph Laplacians. Since all $G_i$ are connected, $L_1, \dots, L_n$ share the same kernel, $\mathrm{span}(\mathbf{1}_{d})$, where $\mathbf{1}_{d} \in \R^d$ is the vector of all ones.
Thus, it suffices to restrict the Laplacians to the orthogonal complement of the kernel and then compute the barycenter. In what follows, we denote restricted inverse graph Laplacians as
\begin{equation}
\label{def:observations}
    S_i \eqdef U_{\mathbf{1}_{d}}^\top L_i^{\dag} U_{\mathbf{1}_{d}} 
    = \left(U_{\mathbf{1}_{d}}^\top L_i U_{\mathbf{1}_{d}}\right)^{-1}, 
\end{equation}
with $U_{\mathbf{1}_{d}} \in \R^{d \times (d-1)}$ being a matrix of an orthonormal basis on $\mathrm{span}(\mathbf{1}_{d})^{\perp} $.
By construction, $S_i \in \H_{++}(d-1)$.

In many practical problems, observed graphs are supposed to be i.i.d., $G_i \overset{\mathrm{iid}}{\sim} P_G$. Consequently, the corresponding graph Laplacians $L_i$ and their inverted restrictions $S_i$ are i.i.d.\ ($S_i \overset{\mathrm{iid}}{\sim} P$).

\subsection{Experiments on Weighted stochastic block model (WSBM)}

We use WSBM data to compare non-asymptotic and asymptotic confidence sets. Each generated graph $G$ has $d$ nodes divided into two non-overlapping groups (communities). The size of each group is random: the first group contains $d_1 = \frac{d}{2} - \mathrm{Unif}\{-2, 2\}$ nodes, and the second group contains $d_2 = d - d_1$ nodes. The corresponding adjacency matrix $A\in \R^{d \times d}$ has a block structure and the  entries within each block are i.i.d.\ Poisson r.v.,  
\[
A = \begin{pmatrix}
    C_{11} & C_{12} \\
    C_{21} & C_{22}
\end{pmatrix},
\quad
a \sim \begin{cases}
    \mathrm{Po}(20) &~~\text{for}~~a \in C_{11},\\
    \mathrm{Po}(15) &~~\text{for}~~a \in C_{22},\\
    \mathrm{Po}(6) &~~\text{for}~~a \in C_{12}, C_{21}.
\end{cases}
\]
The blocks $C_{11}$ and $C_{22}$ represent intra-group connections and $C_{12} = C^{T}_{21}$ represents inter-group connections. The probabilities of observing a non-zero edge between each pair of nodes within the corresponding blocks are $p_{11} = 0.8$, $p_{22} = 0.7$, $p_{12} = p_{21} = 0.3$. 
To ensure that a generated graph is connected, we consider a regularized adjacency matrix $A + \varepsilon E$, with $E$ being a $d\times d$ matrix of all ones. In the experiments, we set $\varepsilon = 1$.

We illustrate the method using dimension $d = 40$. The simulated population contains $N = 8 000$ $d\times d$ WSBM adjacency matrices with inverse graph Laplacians $S_i$ as in \eqref{def:observations}. We compute the population barycenter $B$ from the entire population. To estimate the empirical barycenter $B_n$, we use different sample sizes $n \in \{10, 30, 100\}$. 

Specifically, to estimate the empirical cumulative distribution function (ECDF) of \\
$\sqrt{n}\bw(B_n, B)$, we subsample 
$n$ observations with replacement from the entire population. For estimating the ECDFs of $\sqrt{n}\bw(B_n, B_w)$, we employ the multiplier bootstrap method and set the bootstrap weights to be Poisson, $w_i \sim \mathrm{Po}(1)$. 

For each sample size $n$ and each case, we generate 100 independent curves. Finally, to evaluate the quality of the approximations provided by multiplier bootstrap and asymptotic result, we compute the Kolmogorov distance between the ECDF of $\sqrt{n}\bw(B, B_n)$ and the realizations of $\sqrt{n}\bw(B_n, B_w)$ and $\norm{B^{1/2}_n\dT{B_n}{B_n}(Z)}$, respectively. The lower panel illustrates the distributions of the Kolmogorov distances: the light-blue curves correspond to the bootstrap case, while the orange curves represent the asymptotic case. Fig.~\ref{fig:SBM_d_40} presents the result. 
\begin{figure}[!ht]
    \centering
    \includegraphics[width=250pt]{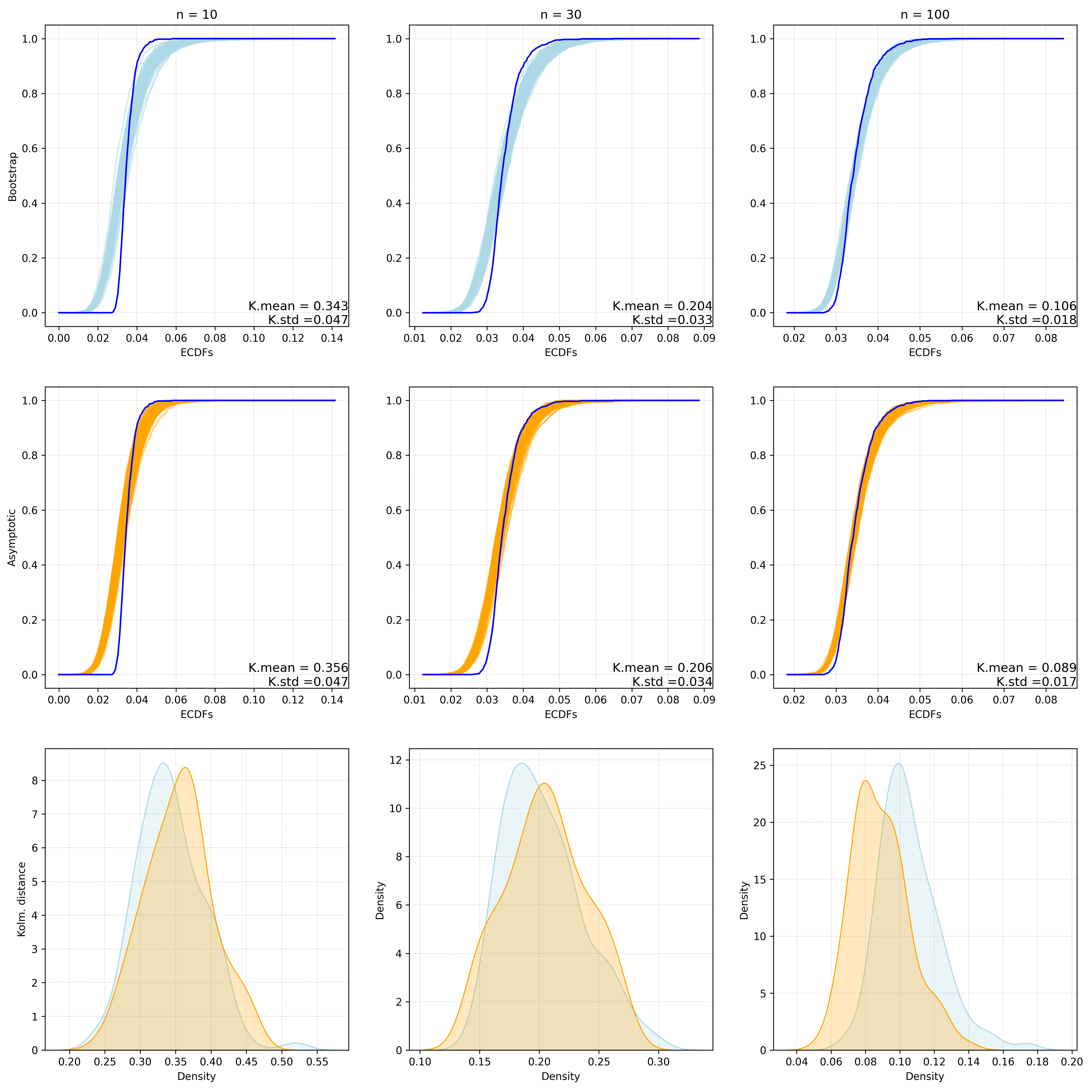}
    \caption{\textbf{WSBM, $d = 40$.} ECDFs of 
    $\sqrt{n}\,\bw(B,B_n)$ (dark blue), 
    $\sqrt{n}\,\bw(B_n,B_w)$ (light blue), and 
    $\bigl\|B_n^{1/2}\,\dT{B_n}{B_n}(Z)\bigr\|_{\F}$ (orange). The mean Kolmogorov distance to the true distribution and its standard deviation are in the lower‐right corner.  The lower panels show the distribution of the Kolmogorov distance between the ``true'' ECDF and each approximation: bootstrap vs.\ ``true'' (light blue) and asymptotic vs.\ ``true'' (orange). }
    \label{fig:SBM_d_40}
\end{figure}

\subsection{Experiments on connectomes}
\label{sec:experiments_on_real_data}

The EEGBCI dataset contains EEG recordings from $64$ electrodes from $109$ participants. Each participant completed $14$ sessions, corresponding to a distinct task associated with imagined movements. Each electrode captures electrical activity from a particular region of the scalp and the underlying brain regions when a person fulfills the task. From these recordings, we construct functional connectomes. Functional connectomes are networks that show how different brain regions connect and interact. Nodes in the network correspond to brain regions. Edge weights represent the interactions between these regions. To construct the connectomes, we used EEG signals from 3 tasks (imagining moving the left hand and the feet). The edge weight between two nodes is the envelope correlation between the EEG signals from the corresponding pairs of electrodes. Thus, we got $109$ connectomes of size $64\times 64$. We convert them to projected graph Laplacians as described in Section~\ref{sec:bw_graphs}.  Using the entire population, we compute the true barycenter $B$. To estimate the distribution of $\sqrt{n} \bw(B, B_n)$ (for $n = 10, 50, 70$), we sample with replacement from the population. To estimate the distribution of $\sqrt{n} \bw(B_n, B_w)$, we employ the multiplier bootstrap with Poisson weights $w_i \sim \mathrm{Po}(1)$. Fig.~\ref{fig:real_data} presents the result.

\begin{figure}[!ht]
    \centering
    \includegraphics[width=250pt]{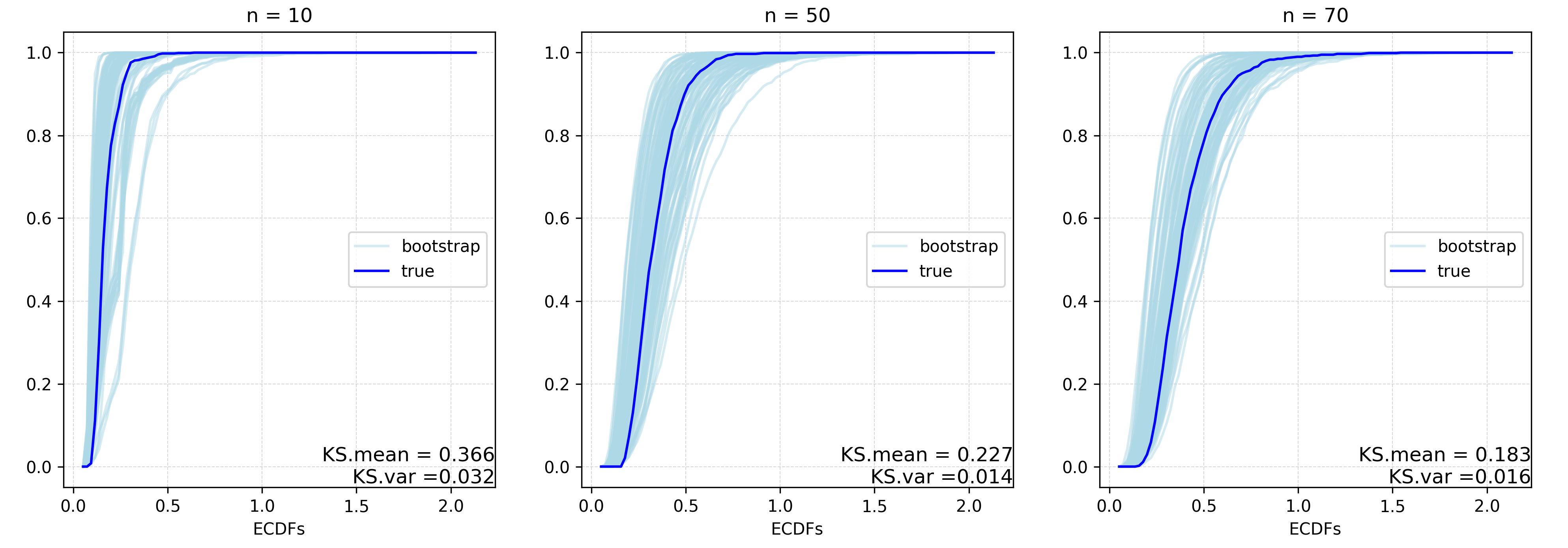}
    \caption{\textbf{EEGBCI, $d = 64$}. Empirical distribution functions for $\sqrt{n} \bw(B_n, B_w)$  (light-blue) and $\sqrt{n} \bw(B_n, B)$ (dark-blue).}
    \label{fig:real_data}
\end{figure}

\bibliographystyle{plainnat} 
\bibliography{references.bib}    

\begin{appendix}
\section{Approximation bounds in the Bures--Wasserstein space}
\label{sec:bw_geometry}

We begin collecting some facts that are crucial for generalized bootstrap validation. Recall that the Fr{\'e}chet differentiabity implies
\[
    T^{S}_{Q + X} = T^{S}_{Q} + \dT{Q}{S}(X) + o(\norm{X})
    \quad
    \text{as $\norm{X} \to 0$,} \quad X \in \H(d),
\]
where $\dT{Q}{Q} \colon \H(d) \to \H(d)$ is a negative semi-definite operator. 
For completeness, we provide Lemma~A.3 by \citet{kroshnin2019statistical}.

\begin{statement}
\label{lemma:dT}
    For any $S \in \H_+(d)$, $Q \in \H_{++}(d)$, the operator $\dT{Q}{S}$ satisfies the following properties:
    \begin{enumerate}[(I)]
        \item it is self-adjoint;
        
        \item it is negative semi-definite;
        
        \item it enjoys the following bounds:
        \begin{align*}
            - \left\langle \dT{Q}{S}(X), X \right\rangle &
            \le \frac{\lmax^{1/2}\left(S^{1/2} Q S^{1/2}\right)}{2} \norm{Q^{- 1 / 2} X Q^{- 1 / 2}}_{\F}^2,\\
            - \left\langle \dT{Q}{S}(X), X \right\rangle &
            \ge \frac{\lmin^{1/2}\left(S^{1/2} Q S^{1/2}\right)}{2} \norm{Q^{- 1 / 2} X Q^{- 1 / 2}}_{\F}^2;
        \end{align*}

        \item it is homogeneous w.r.t.\ $Q$ with degree $-\frac{3}{2}$ and w.r.t.\ $S$ with degree $\frac{1}{2}$, i.e., $\dT{a Q}{S} = a^{-3 / 2} \dT{Q}{S}$ and $\dT{Q}{a S} = a^{1 / 2} \dT{Q}{S}$ for any $a > 0$;
        
        \item it is monotone w.r.t.\ $S^{1/2} Q S^{1/2}$ (once range $S$ is fixed): $\dT{Q_0}{S_0} \preccurlyeq \dT{Q_1}{S_1}$ in the sense of self-adjoint operators on $\H(d)$ whenever $S_0^{1/2} Q_0 S_0^{1/2} \preccurlyeq S_1^{1/2} Q_1 S_1^{1/2}$ and $\range(S_0) = \range(S_1)$; in particular, $\dT{Q}{S}$ is monotone w.r.t.\ $Q \in \H_{++}(d)$ for fixed $S$.
    \end{enumerate}
\end{statement}

Let $Q \in \H_{++}(d)$ and define
\begin{equation*}
\label{def:AQ}
    \A_Q \eqdef \left(- \frac{1}{2} \dT{Q}{Q}\right)^{1/2}.
\end{equation*}
Lemma~A.3 by~\citet{kroshnin2019statistical} ensures its existence.

\begin{lemma}[Properties of of $\A_Q$]
\label{lemma:spec_A}
    The following equalities hold  
	\begin{equation}
	    \label{def:bounds_A}
    	\norm*{\A_Q} = \frac{1}{2 \sqrt{\lmin(Q)}}, \quad
	    \norm*{\A^{-1}_Q} = 2 \sqrt{\lmax(Q)}.
	\end{equation}
    
    Moreover, there exists a unitary operator $\o{U}_Q$ on $\H(d)$ such that for any $X \in \H(d)$ holds
    \begin{equation}
        \label{eq:nice_eq_A}
        (\o{U}_Q \A_Q) X = Q^{1/2} \dT{Q}{Q}(X).
    \end{equation}
\end{lemma}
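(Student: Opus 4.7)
The plan is first to identify $\dT{Q}{Q}$ explicitly via a Lyapunov equation, then to exploit the resulting spectral description for the two norm identities, and finally to obtain the unitary factorisation by polar decomposition. Differentiating the defining identity $T^S_Q \, Q \, T^S_Q = S$ of the Brenier push-forward in the first argument and specialising to $S = Q$ (where $T^Q_Q = \Id$) yields that $Y = \dT{Q}{Q}(X)$ is the unique Hermitian solution of $QY + YQ = -X$. Writing $\mathcal{L}_Q(Y) \eqdef QY + YQ$, this reads $\dT{Q}{Q} = -\mathcal{L}_Q^{-1}$ and hence
\[
\A_Q^2 = -\tfrac{1}{2} \dT{Q}{Q} = \tfrac{1}{2} \mathcal{L}_Q^{-1};
\]
the Fr\'echet differentiability used here is supplied by Lemma~A.3 of~\citet{kroshnin2019statistical}.

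For the norm identities~\eqref{def:bounds_A} I would diagonalise $Q = \sum_i \lambda_i e_i e_i^*$ and observe that $\mathcal{L}_Q$ acts diagonally in the standard Hermitian basis of $\H(d)$ built from this eigenbasis, with eigenvalues $\lambda_i + \lambda_j$, $i \le j$. Therefore $\A_Q$ is diagonal in the same basis with spectrum $\{(2(\lambda_i+\lambda_j))^{-1/2}\}_{i \le j}$; the extremes, attained at $i = j$ with $\lambda_i \in \{\lmin(Q), \lmax(Q)\}$, give $\norm{\A_Q} = (2\sqrt{\lmin(Q)})^{-1}$ and $\lmin(\A_Q) = (2\sqrt{\lmax(Q)})^{-1}$, whence $\norm{\A_Q^{-1}} = 2\sqrt{\lmax(Q)}$.

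For~\eqref{eq:nice_eq_A} I would build $\o{U}_Q$ through polar decomposition, the crux being the isometry identity $\norm{\o{B}(X)}_F = \norm{\A_Q(X)}_F$ for $\o{B} \eqdef Q^{1/2} \dT{Q}{Q}$. Setting $Y = \dT{Q}{Q}(X)$ and using the Hermiticity of $Y$, one has $\norm{\o{B}(X)}_F^2 = \tr(QY^2)$; multiplying the Lyapunov equation $QY + YQ = -X$ by $Y$ and taking trace gives $2\tr(QY^2) = -\tr(XY)$, and substituting $Y = -2\A_Q^2(X)$ together with the self-adjointness of $\A_Q$ yields $\tr(QY^2) = \norm{\A_Q(X)}_F^2$. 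Since $\A_Q$ is invertible by the previous step, $\o{U}_Q \eqdef \o{B}\,\A_Q^{-1}$ is the required isometry and produces the claimed identity.

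The main obstacle is a codomain subtlety: $Q^{1/2} Y$ is not Hermitian when $Q$ and $Y$ fail to commute, so $\o{B}$ sends $\H(d)$ into the ambient Frobenius space of $d \times d$ matrices rather than into $\H(d)$ itself. Accordingly, $\o{U}_Q$ should be read as an isometric embedding of $\H(d)$ onto its image, a real subspace of the same real dimension as $\H(d)$ that is canonically identifiable with it. Once this convention is fixed, the Lyapunov-based computation of $\norm{\o{B}(X)}_F = \norm{\A_Q(X)}_F$ is the technical heart of the argument.
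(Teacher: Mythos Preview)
Your proof is correct and follows essentially the same route as the paper: both establish the isometry identity $\norm{\A_Q X}_F = \norm{Q^{1/2}\dT{Q}{Q}(X)}_F$ (you via the Lyapunov relation $QY+YQ=-X$ and a trace computation, the paper via explicit diagonal-basis coordinates, which is the same calculation written out) and then read off the spectrum of $\A_Q$ as $\{(2(\lambda_i+\lambda_j))^{-1/2}\}$ to get the norm identities. Your remark on the codomain is apt and is a point the paper glosses over: it simply asserts unitary equivalence from the norm identity without noting that $Q^{1/2}\dT{Q}{Q}(X)$ need not lie in $\H(d)$.
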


\begin{proof}
    First we prove~\eqref{eq:nice_eq_A}. Without loss of generality, 
    let $Q$ be a diagonal matrix, i.e., $Q = \diag(q_1, \dots, q_d)$. 
    It is enough to consider a diagonal $Q$, because for any unitary $U$
    \[
    T^{USU^*}_{USU^*} = U T^{S}_{Q} U^*.
    \]
    
    Using the explicit expression for $\dT{Q}{Q}(X)$ (see formula~(A.2) by~\citet{kroshnin2019statistical}), we get
	\begin{align*}
    	- \left\langle \dT{Q}{Q} (X), X \right\rangle 
    	&= \sum_{i, j = 1}^d \frac{X_{ij}}{q_i + q_j} X_{ij}
    	= \sum_{i, j = 1}^d (q_i + q_j) \left(\frac{X_{ij}}{q_i + q_j}\right)^2 \\
    	& = 2 \sum_{i, j = 1}^d \left(\sqrt{q_i} \frac{X_{ij}}{q_i + q_j}\right)^2
    	= 2 \norm*{Q^{1/2} \dT{Q}{Q}(X)}_{\F}^2.
	\end{align*}
	Then $\norm*{\A_Q(X)}_{\F} = \norm*{Q^{1/2} \dT{Q}{Q}(X)}_{\F}$. Thus, these operators are unitary equivalent.
    
    Now we prove~\eqref{def:bounds_A}. The above chain of equations ensures 
	$\norm*{\A_Q(X)}_{\F}^2 
	= \frac{1}{2} \sum_{i, j = 1}^d  \frac{X^2_{ij}}{q_i + q_j}.$
    This yields
	\begin{equation*}
    	\frac{1}{4 \lmax(Q)} \norm*{X}_{\F}^2 
    	\le \norm*{\A_Q(X)}_{\F}^2 
    	\le \frac{1}{4 \lmin(Q)} \norm*{X}_{\F}^2.
	\end{equation*}
 
	One can show in the same way as in the proof of Corollary~A.2 by \citet{kroshnin2019statistical} that these inequalities are sharp. The result follows immediately.
\end{proof}

We will often quantify the closeness of $S \in \H_{++}(d)$ and $Q\in \H_{++}(d)$ as
\[
r(Q, S) \eqdef \norm*{Q^{-1/2}S Q^{-1/2}-I},
\]
with $\norm{\cdot}$ being the operator norm and $I$ standing for the $d \times d$ identity matrix.

\begin{lemma}[Local Lipschitz continuity of $\A_Q$]
\label{lemma:loc_Lip}
    Let $B, Q \in \H_{++}(d)$. 
    If $r(B, Q) \le 1/2$, 
    \[
    \norm{\A_B - \A_Q} \le r(B, Q) \cdot \norm{\A_B}. 
    \]
\end{lemma}

\begin{proof}
    Let $Q' = B^{-1/2}Q B^{-1/2}$.
    Lemma~\ref{lemma:dT} ensures that the mapping $Q \mapsto \dT{Q}{Q}$ is monotone and $(-1)$-homogeneous.
    Then $Q \mapsto \A_Q$ is antimonotone and $(-\frac{1}{2})$-homogeneous. This entails
    \[
        \left(1 - \frac{1}{2} r(B, Q)\right) \A_B \preccurlyeq \frac{1}{\sqrt{\lmax(Q')}} \A_B
        \preccurlyeq \A_Q 
        \preccurlyeq \frac{1}{\sqrt{\lmin(Q')}} \A_B \preccurlyeq \left(1 + r(B, Q)\right) \A_B ,
    \]
    what yields the result.
\end{proof}

Now we are to establish a connection between $\bw(Q, S)$ and the Frobenius norm of the difference $\norm*{Q-S}_{\F}$. 

\begin{lemma}
\label{lemma:A_Frob_norm_new}
    Let $Q, S \in \H_{+}(d)$ be such that $r(B, Q) \le 1/2$ and
    $r(B, S) \le 1/2$.
    Then
    \begin{equation*}
        \label{eq:approx_BW_new}
        \abs*{\frac{\bw(Q, S)}{\norm*{\A_{B} (Q - S)}_{\F}} - 1} \le 4 r(B, Q) + 2 r(B, S).
    \end{equation*}
\end{lemma}

\begin{proof}
    For simplicity, we denote
    \[
    S'_Q \eqset Q^{-1/2} S Q^{-1/2}, 
    \quad
    Q'_B \eqset B^{-1/2}QB^{-1/2},
    \quad
    S'_B \eqset B^{-1/2}SB^{-1/2}.
    \]

	Lemma~A.6 by \citet{kroshnin2019statistical} ensures
	\begin{align*}
    	&-\frac{2}{\left(1 + \lmax^{1/2}(S'_Q)\right)^2}  \left\langle \dT{Q}{Q} (S - Q), S - Q \right\rangle 
    	\le \bw^2(S, Q) \\
        &\le
    	-\frac{2}{\left(1 + \lmin^{1/2}(S'_Q)\right)^2} \left\langle \dT{Q}{Q} (S - Q), S - Q \right\rangle.
	\end{align*}
	
    Due to the monotonicity and homogeneity of the operator $\dT{Q}{S}$ (see (IV) and (V) in Lemma~\ref{lemma:dT}), it holds that
	\begin{align*}
    	\dT{Q}{Q}  \preccurlyeq  \dT{\lmax(Q'_B) B}{\lmax(Q'_B) B}  = \frac{1}{\lmax(Q'_B)} \dT{B}{B}, \quad
    	\dT{Q}{Q}  \succcurlyeq \dT{\lmin(Q'_B) B}{\lmin(Q'_B) B} = \frac{1}{\lmin(Q'_B)} \dT{B}{B}.
	\end{align*}
 
	Combining these inequalities with \eqref{eq:nice_eq_A}, we get
	\begin{align}
	    \label{eq:bound_dbw_A}
	    \frac{4 \norm*{\A_B(S - Q)}^2_F}{\lmax(Q'_B)\left(1 + \lmax^{1/2}(S'_Q)\right)^2}
    	\le \bw^2(S, Q) 
    	\le \frac{4 \norm*{\A_B(S - Q)}^2_F}{\lmin(Q'_B)\left(1 + \lmin^{1/2}(S'_Q)\right)^2}.
	\end{align}
     
    The last step is to get the bounds on $\lmin(Q'_B)$ and $\lmax(Q'_B)$. 
    Let $r_Q \eqset r(B, Q),
    r_S \eqset r(B, S)$. It holds
    \begin{equation*}
    1 - r_Q \le \lmin(Q'_B) \le \lmax(Q'_B) \le 1 + r_Q.
    \end{equation*}
 
    Assumption $r_Q \le \frac{1}{2}$ yields
    \[
        \lmax^{-1/2}(Q'_B) \ge 1 - \frac{1}{2} r_Q, 
        \quad 
        \lmin^{-1/2}(Q'_B) \le 1 + 2 r_Q.
    \]
 
	Further, assumptions $r_Q \le \frac{1}{2}$ and $r_S \le \frac{1}{2}$ yield
	\begin{equation*}
	    \lmin(S'_Q) \ge \tfrac{\lmin(S'_B)}{\lmax(Q'_B)} \ge 1 - r_Q - r_S, \quad
	    \lmax(S'_Q) \le \tfrac{\lmax(S'_B)}{\lmin(Q'_B)} \le 1 + 2 r_Q + 2 r_S.
	\end{equation*}

	Then 
	\begin{equation*}
        \tfrac{2}{1 + \lmax^{1/2}(S'_Q)} \ge 1 - \frac{1}{2} r_Q - \frac{1}{2} r_S, \quad
    	\tfrac{2}{1 + \lmin^{1/2}(S'_Q)} \le 1 + r_Q + r_S.
	\end{equation*}
	Thus, we obtain
    \begin{align*}
        &2 \lmax^{-1/2}(Q'_B)\left(1 + \lmax^{1/2}(S'_Q) \right)^{-1} \ge 1 - r_Q - \frac{1}{2} r_S,\\
        &2 \lmin^{-1/2}(Q'_B)\left(1 + \lmin^{1/2}(S'_Q) \right)^{-1} \le 1 + 4 r_Q + 2 r_S .
    \end{align*}
    Combining these inequalities with~\eqref{eq:bound_dbw_A}, we get the result.
\end{proof}

The following lemma connects $B_{\mu}$ and $T_{\mu}$. Let $\o{F}$ be some fixed positive-definite operator acting from $\H(d)$ to $\H(d)$. Denote
\begin{equation}
\label{def:rho}
    r \eqdef r(B, B_{\mu}) + r(\o{F}, \o{F}_{\mu}),
    \quad
    \rho \eqdef 2 \sqrt{\kappa(\o{F})} r,
\end{equation}
with $\kappa(X) = \norm{X} \cdot \norm{X^{-1} }$ being the condition number of $X$.

By analogy with the barycenter mapping $\mathscr{B}$ \eqref{def:bar_map}, we define the $\mathscr{T}$-mapping and $\mathscr{F}$-mapping. Let $\mu' \in \mathcal{M}_{2}\bigl(\H_{++}(d)\bigr)$. We set
\begin{gather}
\label{def:T_mu}
    \mathscr{T}:\mu' \mapsto T_{\mu'}\eqdef \underset{\H_{++}(d)}{\int} \left(T^{S}_{B} - I \right)d\mu'(S);,
    \quad 
    \mathscr{F}: \mu' \mapsto \o{F}_{\mu'} \eqdef -\underset{\H_{++}(d)}{\int} \dT{B}{S}\dd \mu'(S), 
\end{gather}
with $T^{S}_{B}$ and $\dT{B}{S}$ coming from \eqref{def:dTQS}.

\begin{lemma}
\label{lemma:Qw_expansion}
    If $r \le \frac{1}{2}$. Then, the following approximations hold:
    \begin{gather}
    \label{bound:B_OT_F}
    \frac{\norm*{B_{\mu} - B -  \o{F}^{-1} \Tmw}_{\F}}{\norm*{\o{F}^{-1} \Tmw}_{\F}}  \le \rho,\\
    \label{bound:B_OT_BW}
    \abs*{\frac{\bw(B_{\mu}, B)}{\norm*{\A \o{F}^{-1} \Tmw}_{\F}} - 1}
    \le 3 \sqrt{\kappa(B)} \rho.
    \end{gather}
\end{lemma}

\begin{proof}
    First, we introduce an auxiliary operator $\o{D}_{\mu}$. 
    Let $B_{t} = tB_{\mu} + (1-t)B$, $t\in [0, 1]$. We set
    \begin{equation}
    \label{def:D_mu}
    \o{D}_{\mu} \eqdef - \underset{\H_{++}(d)}{\int}  \left[ \int\limits_{0}^{1} \dT{B_t}{S}\dd t\right]d\mu(S).
    \end{equation}

    \paragraph*{Proof of \eqref{bound:B_OT_F}} 
    We write the Taylor expansion for $B_{\mu}$ in the neighbourhood of $B$ in integral form (see Theorem~2.2 by~\citet{kroshnin2019statistical}),
    $
    B_\mu - B = \o{D}^{-1}_{\mu} T_\mu.
    $
    This ensures $B_{\mu} - B - \o{F}^{-1} \Tmw = \left(\o{D}_{\mu}^{-1} \o{F} - \Id\right) \o{F}^{-1} \Tmw,$
    with $\o{I}$ being the identity operator. We set $B_{\Delta} \eqset B_{\mu} - B$ and get
	\begin{equation*}
    	\frac{\norm*{B_{\Delta} - \o{F}^{-1} \Tmw}_{\F}}{\norm*{\o{F}^{-1} \Tmw}_{\F}} 
    	\le \norm*{\o{D}_{\mu}^{-1} \o{F} - \Id}.
	\end{equation*}
    The bounds on $\o{D}_{\mu}$ from Lemma~\ref{lemma:D_bounds} yield 
    $
    \left(1 - r\right) \o{F}^{-1}
    \preccurlyeq 
    \o{D}^{-1}_{\mu}
    \preccurlyeq
    \left(1 + 2r\right) \o{F}^{-1}.
    $
    Therefore,
    \[
    \norm*{\o{D}_{\mu}^{-1} \o{F} - \Id}
    \le \sqrt{\kappa(\o{F})}
    r(\o{D}_{\mu}^{-1}, \o{F}^{-1})
    \le \rho.
    \]
    The claim follows. To prove \eqref{bound:B_OT_BW}, 
    we use Lemma~\ref{lemma:A_Frob_norm_new} and set $ Q = B$, $S = B_{\mu}$. This yields
    \[
    \abs*{\frac{\bw(B_{\mu}, B)}{\norm*{\A B_{\Delta}}_{\F}} - 1} \le 2 r_B.
    \]

    Combining the above line of reasoning with the triangle inequality, we get
    \begin{align*}
    	\abs*{\frac{\norm*{\A B_{\Delta}}_{\F}}{\norm*{\A \o{F}^{-1} \Tmw}_{\F}} - 1}
    	\le \frac{\norm*{\A\left(B_{\Delta} - \o{F}^{-1} \Tmw\right)}_{\F}}{\norm*{\A \o{F}^{-1} \Tmw}_{\F}} 
        \le \kappa(\A) \frac{\norm*{B_{\Delta} - \o{F}^{-1} \Tmw}_{\F}}{\norm*{\o{F}^{-1} \Tmw}_{\F}} 
    	\overset{\text{by~}\eqref{bound:B_OT_F}}{\le} \sqrt{\kappa(B)} \rho.
    \end{align*}
    Note that the last inequality holds due to $\kappa(\A) = \sqrt{\kappa(B)}$ (see Lemma~\ref{lemma:spec_A}). 
    Combining the above bounds, we get
	\begin{align*}
        \abs*{\frac{\bw(B_{\mu}, B)}{\norm*{\A \o{F}^{-1} \Tmw}_{\F}} - 1} 
        &\le 2 r_B 
        + \left(1 + 2r_B\right) \abs*{\frac{\norm*{\A B_{\Delta}}_{\F}}{\norm*{\A \o{F}^{-1} \Tmw}_{\F}} - 1}  \\
        &\le 2 r_B + 2 \abs*{\frac{\norm*{\A B_{\Delta}}_{\F}}{\norm*{\A \o{F}^{-1} \Tmw}_{\F}} - 1} \le 3 \sqrt{\kappa(B)} \rho.
	\end{align*}
     The second and the third inequalities rely on $r \le \frac{1}{2}$ and $2 r_B \le \rho$, respectively.
\end{proof}

Now we fix some $\hat{\mu} \in \mathcal{M}\bigl(\H_{++}(d)\bigr) $ and define
\begin{equation}
\label{def:rho_hat}
    \hat{r} \eqdef r(B, B_{\hat{\mu}}) + r(\o{F}, \o{F}_{\hat{\mu}}),
    \quad
    \hat{\rho} \eqdef 2\sqrt{\kappa(\o{F})}~\hat{r}.
\end{equation}
    
\begin{corollary}
\label{def:corollary_bounds_F_BW}
    If $r \le \frac{1}{2}$ and $\hat{r} \le \frac{1}{2}$, then the following bounds hold
    \begin{align}
    \label{def:corollary_bounds_F}
        &\norm*{B_{\hat{\mu}} - B_{\mu} - \o{F}^{-1} (T_{\hat{\mu}} - T_{\mu})}_{\F} \le \hat{\rho} \norm*{\o{F}^{-1} (T_{\hat{\mu}} - T_{\mu})}_{\F} 
        + (\rho + \hat{\rho}) \norm*{\o{F}^{-1} T_{\mu}}_{\F},
    \end{align}
    \begin{align}\label{def:corollary_bounds_BW}
        &\abs*{\bw(B_{\hat{\mu}}, B_{\mu}) - \norm{\A \o{F}^{-1} (T_{\hat{\mu}} - T_\mu)}_{\F}} \\ 
        &\qquad \le 6 \kappa(\A) \left(\hat{\rho} + \rho \right) \norm{\A\o{F}^{-1} \left(T_{\hat{\mu}} - T_{\mu} \right) }_{\F}
        + 4 \left(\hat{\rho} + \rho\right)\norm{\A}
        \norm{\o{F}^{-1} T_{\mu}}_{\F}.\nonumber
    \end{align}
\end{corollary}

\begin{proof}
Claim~\eqref{def:corollary_bounds_F} follows directly from
\eqref{bound:B_OT_F}. Next, we prove \eqref{def:corollary_bounds_BW}. 
For the moment, we set
$\Delta \eqset B_{\mu} - B_{\nu} - \o{F}^{-1}\left(T_{\mu} - T_{\hat{\mu}}\right),$
and $\hat{r}_B \eqset r(B, B_{\hat{\mu} })$, $
\hat{r}_{F} \eqset r(\o{F}, \o{F}_{\hat{\mu}})$, $
\hat{r} \eqset \hat{r}_B + \hat{r}_{\F}$, $
\hat{\rho} \eqset 2\sqrt{\kappa(\o{F})} ~\hat{r}.$ 
Lemma~\ref{lemma:A_Frob_norm_new} combined with \eqref{def:corollary_bounds_F} yields
\begin{align*}
    |
    \bw(B_{\mu}, B_{\nu}) & - 
    \norm{\A\o{F}^{-1}\left(T_{\mu} - T_{\hat{\mu}} \right) }_{\F}
    |  \\
    &\le 
    \left(4r_B + 2\hat{r}_B \right) \norm{\A \left(B_{\mu} - B_{\nu}\right) }_{\F} + \norm{\A \Delta }_{\F}\\
    & =  
    \left(4r_B + 2\hat{r}_B \right) \norm{\A \left(\Delta + \o{F}^{-1} \left(T_{\mu} - T_{\hat{\mu}}\right)  \right) }_{\F} 
    + \norm{\A\Delta}_{\F} \\
    &\le \left(4r_B + 2\hat{r}_B \right) \norm{ \A\o{F}^{-1}\left(T_{\mu} - T_{\hat{\mu}} \right)}_{\F} + \left(1 + 4r_B + 2\hat{r}_B \right) \norm{\A\Delta}_{\F}\\
    &\overset{{ \text{by}~ (\ref{def:corollary_bounds_F}})}{\le} 
    \left(4r_B + 2\hat{r}_B + \hat{\rho}  \left(1 + 4r_B + 2\hat{r}_B\right) \right) \norm{\A}\norm{ \o{F}^{-1}\left(T_{\mu} - T_{\hat{\mu}} \right)}_{\F}\\
    &+\left( 1 + 4r_B + 2\hat{r}_B \right) \left(\rho + \hat{\rho}~\right)\norm{\A} \norm{ \o{F}^{-1} T_{\mu} }_{F} \\
    &\le 6 \kappa(\A)  (\hat{\rho} + \rho ) \norm{\A\o{F}^{-1} \left(T_{\mu} - T_{\hat{\mu}} \right) } + 4 (\hat{\rho} + \rho ) \norm{\A}
    \norm{\o{F}^{-1} T_{\mu}}.
\end{align*}
\end{proof}

The next result estimates the proximity of $B_{\mu}$ and $B$ in terms of $\norm{\Tmw}_{\F}$.
\begin{lemma}
\label{lemma:concentrationQ_w}
    For $X \in \H(d)$ we denote $\o{\xi}(X) \eqdef B^{1/2} \o{F} \left(B^{1/2} X B^{1/2}\right) B^{1/2},$ and set
    \begin{equation}
    \label{def:cb}
    c_B \eqdef \frac{4 \norm*{B}}{\lmin(\o{\xi})}.
    \end{equation}
    Assume that $r(\o{F}, \o{F}_{\mu}) \le \frac{1}{2}$ and
    $\norm*{\Tmw}_{\F} \le \frac{4}{3 c_B}$.
	Then
	\[
	\norm*{B^{-1/2}B_{\mu}B^{-1/2} - I}_{\F} \le c_B \norm*{\Tmw}_{\F}.
	\]
\end{lemma}

\begin{proof}
	First, we set
 	\[
	\o{\xi}_{\mu}(X) = B^{1/2} \o{F}_{\mu} \left(B^{1/2} X B^{1/2}\right) B^{1/2},
    \quad
    \zeta_{\mu} = \frac{1}{\lmin(\o{\xi}_{\mu})} \norm*{B^{1/2} \Tmw B^{1/2}}_{\F}.
	\]
    Provided that $\zeta_{\mu} \le \frac{2}{3}$, Lemma~B.1 by~\citet{kroshnin2019statistical} ensures
	\[
	\norm*{B^{-1/2} B_{\mu} B^{-1/2} - I}_{\F} \le \frac{\zeta_{\mu}}{1 - \tfrac{3}{4} \zeta_{\mu}} \le 2 \zeta_{\mu}. 
	\]
    Now we show that condition $\zeta_{\mu} \le \frac{2}{3}$ holds. 
    Assumption $r(\o{F}, \o{F}_{\mu}) \le \frac{1}{2}$ implies $r(\o{\xi}, \o{\xi}_{\mu})\le \frac{1}{2}$.
    This yields $\lmin(\o{\xi}_{\mu}) \ge \frac{\lmin(\o{\xi})}{2}$.
    Therefore, the assumptions of the lemma ensure $	\zeta_{\mu} \le \frac{c_B}{2} \norm*{\Tmw}_{\F} \le \frac{2}{3}.$
    This finishes the proof.
\end{proof}

In the rest of this section, we will use the following notations,
\[
r_B \eqset r(B, B_{\mu}),
\quad
r_{F} \eqset r(\o{F}, \o{F}_\mu),
\quad
r \eqset r_B + r_F,
\quad
\rho \eqset 2\sqrt{\kappa(\o{F})}r.
\]

The next lemma bounds the operator $\o{D}_{\mu}$ defined in \eqref{def:D_mu}. This result is crucial for the proof of Lemma~\ref{lemma:Qw_expansion}.

\begin{lemma}[Bounds on $\o{D}_{\mu}$]
\label{lemma:D_bounds} If $r \le \frac{1}{2}$, then
    \begin{equation*}
    	\frac{1}{1 + 2 r} \o{F}
    	\preccurlyeq \o{D}_{\mu}
    	\preccurlyeq \frac{1}{1 - r} \o{F}.
    \end{equation*}
\end{lemma}

\begin{proof}
    Let $B_t = (1 - t) B + t B_{\mu}$. Lemma~A.4 by~\citet{kroshnin2019statistical} ensures 
	\begin{equation*}
    	\frac{1}{1 - r_B} \dT{B}{S}
    	\preccurlyeq 
    	\int\limits^{1}_{0} \dT{B_{t}}{S} \diff t
    	\preccurlyeq
    	\frac{1}{1 + \tfrac{3}{4} r_B} \dT{B}{S}. 
	\end{equation*}
 
	Now recall the definition on the operator $\o{F}_{\mu}$ (see \eqref{def:T_mu}). Integrating the above inequality over $\diff \mu(S)$, we get
	\begin{equation*}
    	\frac{1}{1 + \tfrac{3}{4} r_B} \o{F}_{\mu}
    	\preccurlyeq \o{D}_{\mu}
    	\preccurlyeq \frac{1}{1 - r_B} \o{F}_{\mu}.
	\end{equation*}
    Since $r_F = \norm{\o{F}^{-1/2}\o{F}_{\mu}\o{F}^{-1/2} - I}$, it holds $   \left(1 - r_F\right) \o{F} 
        \preccurlyeq \o{F}_{\mu}
        \preccurlyeq \left(1 + r_F\right) \o{F}.$ Combining these bounds, we obtain:
    \begin{gather*}
        \frac{1}{1 + 2 r} \o{F}
        \preccurlyeq \frac{1 - r_F}{1 + \tfrac{3}{4} r_B} \o{F}
        \preccurlyeq \o{D}_{\mu}
        \preccurlyeq \frac{1 + r_F}{1 - r_B} \o{F}
        \preccurlyeq \frac{1}{1 - r} \o{F}.
    \end{gather*}
\end{proof}

\section{Gaussian approximation result (GAR)}
\label{sec:main_gauss_approx}

This section presents the general Gaussian approximation result. 
It is the key ingredient for bootstrap validity. The first lemma contains an auxiliary term $\gamma(\cdot)$. To avoid breaking the logic of the presentation, we will define $\gamma(\cdot)$ immediately after the lemma. Moreover, from now on, we will denote generic absolute constants as $\CONST$.

Now we define $\gamma(\cdot)$. Let $\o{K}$ be a positive semi-definite Hilbert--Schmidt operator. We assume its eigenvalues $\{\lambda_k\}_k$ are arranged in non-increasing order. We define 
\begin{equation}
    \label{def:kappa}
	\varkappa(\o{K}) \eqdef \left(\Lambda_1 \Lambda_2 \right)^{-1/2}~~\text{with}~~
	\Lambda^2_r \eqdef \sum_{k \ge r} \lambda^2_k,~\text{where}~r = 1, 2.
\end{equation}
Lemma~\ref{lemma:bounds_kappa} investigates the properties $\varkappa(\o{K})$. Let
\begin{equation}
    \label{def:gamma0}
    \gamma(\o{K}) \eqdef \varkappa(\o{K}) \tr(\o{K}).
\end{equation}

Note that the function $\gamma(\o{K})$ is dimension-free (i.e., scale-invariant). Moreover, $\gamma(\o{K}) \ge 1$.
This follows from the fact that for any $r \ge 1$ it holds 
\[
\Lambda^2_r \le \left(\sum_{k \ge r} \lambda_k\right)^2 \le \left(\tr(\o{K})\right)^2.
\]

\begin{proof}[Proof of Lemma~\ref{lemma:GAR_general}]
    The union bound ensures
    \begin{align*}
        &\P\left\{X \le z \right\} 
        \le \P\left\{Y \le \tfrac{z + m}{1 - \rho} \right\} + \P\left\{\abs{X - Y} > \rho Y + m\right\}
        \le \P\left\{Y \le \tfrac{z + m}{1 - \rho} \right\} + \delta, \\
        &\P\left\{Y \le \tfrac{z - m}{1 + \rho} \right\}
        \le \P\left\{X \le z \right\} + \P\left\{\abs{X - Y} > \rho Y + m\right\}
        \le \P\left\{X \le z \right\} + \delta.
    \end{align*}
    
    Thus
    \begin{equation*}
        \P\left\{Y \le \tfrac{z - m}{1 + \rho} \right\} - \delta 
        \le \P\left\{X \le z \right\} 
        \le \P\left\{Y \le \tfrac{z + m}{1 - \rho} \right\} + \delta.
    \end{equation*}
    
    Assumption \eqref{eq:gar_aux} yields
    \begin{equation*}
        \P\left\{\norm{G}_H \le \tfrac{z - m}{1 + \rho} \right\} - \delta - \Delta 
        \le \P\left\{X \le z \right\} 
        \le \P\left\{\norm{G}_H \le \tfrac{z + m}{1 - \rho} \right\} + \delta + \Delta.
    \end{equation*}
    
    Now one has to bound $\P\left\{\norm{G}_H \le \tfrac{z - m}{1 + \rho} \right\}$ and $\P\left\{\norm{G}_H \le \tfrac{z + m}{1 - \rho} \right\}$. The assumption of the lemma $\rho \in \left[0, \tfrac{1}{2}\right]$ together with Lemma~\ref{lemma:anticoncentration} yield
    \begin{align*}
        \P\left\{\norm{G}_H \le \tfrac{z - m}{1 + \rho} \right\} & \ge \P\left\{\norm{G}_H \le \tfrac{z}{1 + \rho} \right\} - \CONST \gamma(\o{K}) \frac{m}{\sqrt{\tr(\o{K})}} ,\\
        \P\left\{\norm{G}_H \le \tfrac{z + m}{1 - \rho} \right\} & \le \P\left\{\norm{G}_H \le \tfrac{z}{1 - \rho} \right\} + \CONST \gamma(\o{K}) \frac{m}{\sqrt{\tr(\o{K})}} .
    \end{align*}
    
    Now we consider a Gaussian r.v.\ $\alpha G$ with some $\alpha > 0$.
    Note that by definition $\varkappa(\alpha^2 \o{K}) = \frac{1}{\alpha^2} \varkappa(\o{K})$.
    To compare $G$ and $\alpha G$ we use Corollary~2.3 by~\citet{gotze2017gaussian}. 
    This ensures for any $z > 0$
    \begin{align*}
        \abs*{\P\left\{\norm{G}_H \le \tfrac{z}{\alpha}\right\} -  \P\left\{\norm{G}_H \le z\right\}} 
        &\le \CONST \left(\varkappa(\o{K}) + \varkappa(\alpha^2 \o{K})\right) \norm*{\o{K} - \alpha^2 \o{K}}_{1}\\
        &= \CONST \left(1 + \tfrac{1}{\alpha^2}\right) \abs{1 - \alpha^2} \varkappa(\o{K}) \tr(\o{K}).
    \end{align*}
    
    Setting $\alpha = {1 + \rho}$ and taking into account that $\rho \in [0, \tfrac{1}{2}]$, we obtain
    \[
    \P\left\{\norm{G}_H \le \tfrac{z}{1 + \rho}\right\} 
    \ge \P\left\{\norm{G}_H \le z\right\} - \CONST \gamma(\o{K}) \rho.
    \]
    In a similar way,
    \[
    \P\left\{\norm{G}_H \le \tfrac{z}{1 - \rho}\right\} 
    \le \P\left\{\norm{G}_H \le z\right\} + \CONST \gamma(\o{K}) \rho.
    \]
    Collecting all the bounds, we get the result.
\end{proof}

The next lemma investigates
the properties of $\varkappa(\cdot)$ introduced by \eqref{def:kappa}.

\begin{lemma}[Bounds on $\varkappa(\cdot)$]
\label{lemma:bounds_kappa}
	Let $\o{\Psi}$ and $\o{\Phi}$ be symmetric operators, such that
	$
	\norm*{\o{\Phi} - \o{\Psi}}_1
	\le \frac{\Lambda_2^2\left(\o{\Psi}\right)}{4 \norm{\o{\Psi}}},
	$
    with $\norm{\cdot}_1$ be $1$-Schatten norm. Then the following bounds hold,
	\[
	\varkappa(\o{\Phi}) \le 2 \varkappa(\o{\Psi}),
	\quad
	\tr\o{\Phi} \le \tfrac{5}{4} \tr \o{\Psi}.
	\]
\end{lemma}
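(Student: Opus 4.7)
The plan is to reduce both bounds to the Lidskii--Wielandt majorization inequality
\[
\sum_k |\mu_k - \lambda_k| \le \norm{\o{\Phi} - \o{\Psi}}_1,
\]
applied to the eigenvalues $\{\lambda_k\}$ and $\{\mu_k\}$ of $\o{\Psi}$ and $\o{\Phi}$, both sorted in non-increasing order. As in the rest of the paper, the operators of interest are positive semi-definite (they arise as rescaled covariances), so I would work under the implicit assumption $\o{\Psi}, \o{\Phi} \succcurlyeq 0$; then $\lambda_k, \mu_k \ge 0$ and $\lambda_1 = \norm{\o{\Psi}}$.

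For the trace bound, I would write $\tr \o{\Phi} - \tr \o{\Psi} = \sum_k (\mu_k - \lambda_k)$ and combine the triangle inequality with Lidskii--Wielandt and the hypothesis to obtain $|\tr \o{\Phi} - \tr \o{\Psi}| \le \Lambda_2^2(\o{\Psi})/(4 \norm{\o{\Psi}})$. The key observation is the elementary estimate
\[
\Lambda_2^2(\o{\Psi}) = \sum_{k \ge 2} \lambda_k^2 \le \lambda_1 \sum_{k \ge 2} \lambda_k \le \norm{\o{\Psi}}\, \tr \o{\Psi},
\]
which yields $|\tr \o{\Phi} - \tr \o{\Psi}| \le \tr(\o{\Psi})/4$, and hence $\tr \o{\Phi} \le \tfrac{5}{4} \tr \o{\Psi}$.

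For the $\varkappa$ bound, the target is $\Lambda_r(\o{\Phi}) \ge \Lambda_r(\o{\Psi})/\sqrt{2}$ for both $r \in \{1, 2\}$. Starting from $\mu_k^2 = \lambda_k^2 + 2\lambda_k(\mu_k - \lambda_k) + (\mu_k - \lambda_k)^2$ and dropping the non-negative quadratic term, summation over $k \ge r$ gives
\[
\Lambda_r^2(\o{\Phi}) \ge \Lambda_r^2(\o{\Psi}) - 2 \sum_{k \ge r} \lambda_k |\mu_k - \lambda_k|
\ge \Lambda_r^2(\o{\Psi}) - 2 \norm{\o{\Psi}} \cdot \tfrac{\Lambda_2^2(\o{\Psi})}{4\norm{\o{\Psi}}}
= \Lambda_r^2(\o{\Psi}) - \tfrac{1}{2}\Lambda_2^2(\o{\Psi}),
\]
where the middle inequality uses $\lambda_k \le \lambda_1 = \norm{\o{\Psi}}$ together with Lidskii--Wielandt. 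Since $\Lambda_2 \le \Lambda_r$ for $r \in \{1,2\}$, the right-hand side is at least $\Lambda_r^2(\o{\Psi})/2$, whence $\varkappa(\o{\Phi}) = (\Lambda_1(\o{\Phi}) \Lambda_2(\o{\Phi}))^{-1/2} \le \sqrt{2}\,\varkappa(\o{\Psi}) \le 2\,\varkappa(\o{\Psi})$.

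The only delicate point is ensuring that Lidskii--Wielandt is applied with eigenvalues sorted in the \emph{same} (non-increasing) order for both operators, so that the cross term $2\lambda_k(\mu_k - \lambda_k)$ is controlled by the nuclear norm rather than by something larger; once this is in place, positivity and the sorting $\lambda_k \le \lambda_1$ turn the whole argument into a one-line computation.
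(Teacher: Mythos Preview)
Your argument is correct and follows essentially the same route as the paper's: both proofs rest on the eigenvalue perturbation inequality $\Lambda_r^2(\o{\Phi}) \ge \Lambda_r^2(\o{\Psi}) - c\,\norm{\o{\Psi}}\,\norm{\o{\Phi}-\o{\Psi}}_1$ together with the elementary estimate $\Lambda_2^2(\o{\Psi}) \le \norm{\o{\Psi}}\,\tr\o{\Psi}$ for the trace part. The only cosmetic difference is that the paper multiplies the two lower bounds to control $\Lambda_1^2(\o{\Phi})\Lambda_2^2(\o{\Phi})$ directly and then inverts, whereas you bound $\Lambda_1(\o{\Phi})$ and $\Lambda_2(\o{\Phi})$ separately; your route even yields the sharper constant $\sqrt{2}$ in place of $2$, and your explicit square expansion via Lidskii--Wielandt makes the intermediate perturbation bound (which the paper states without justification, with constant $c=1$) fully transparent with $c=2$.
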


\begin{proof}
	Note, that
	$\Lambda_2^2\left(\o{\Psi}\right) \le \Lambda_1^2\left(\o{\Psi}\right) \le \norm*{\o{\Psi}} \tr(\o{\Psi})$
	and therefore,
	\[
    	\tr\left(\o{\Phi}\right) 
    	\le \tr(\o{\Psi}) + \norm*{\o{\Phi} - \o{\Psi}}_1
    	\le \frac{5}{4} \tr(\o{\Psi}).
	\]
	By the definition of $\Lambda^2_r(\cdot)$,
	$\Lambda_r^2\left(\o{\Phi}\right) 
	\ge \Lambda_r^2\left(\o{\Psi}\right) - \norm*{\o{\Psi}} \norm*{\o{\Phi} - \o{\Psi}}_1$,
	then
	\[
    	\Lambda_1^2\left(\o{\Phi}\right) \Lambda_2^2\left(\o{\Phi}\right)
    	\ge \Lambda_1^2\left(\o{\Psi}\right) \Lambda_2^2\left(\o{\Psi}\right) - \left(\Lambda_1^2\left(\o{\Psi}\right) 
    	+ \Lambda_2^2\left(\o{\Psi}\right)\right) \norm*{\o{\Psi}}\norm*{\o{\Phi} - \o{\Psi}}_1.
	\]
	Then it follows that
	\begin{align*}
    	\varkappa(\o{\Phi})
    	&\le \varkappa(\o{\Psi}) \left(1 - \tfrac{\Lambda_1^2\left(\o{\Psi}\right) + \Lambda_2^2\left(\o{\Psi}\right)}{\Lambda_1^2\left(\o{\Psi}\right) \Lambda_2^2\left(\o{\Psi}\right)} \norm*{\o{\Psi}} \norm*{\o{\Phi} - \o{\Psi}}_1\right)^{-1} \\
        &\le \varkappa(\o{\Psi}) \left(1 - 2 \tfrac{\norm*{\o{\Psi}}}{\Lambda_2^2\left(\o{\Psi}\right)} \norm*{\o{\Phi} - \o{\Psi}}_1\right)^{-1}
    	\le 2 \varkappa(\o{\Psi}). \qedhere
	\end{align*}
\end{proof}

\begin{lemma}[Anti-concentration]
\label{lemma:anticoncentration}
    Let $G \sim \ND(0, \o{K})$ be a Gaussian vector taking values in 
    some Hilbert space $H$. Then for any $\eps, x \ge 0$ the following
    anti-concentration bound holds:
	\[
	\P\left\{x \le \norm{G}_H \le x + \eps\right\} 
	\le \CONST \gamma(\o{K}) \frac{\eps}{\sqrt{\tr(\o{K})}}.
	\]
\end{lemma}

\begin{proof}
    For any $x, h, \eps > 0$ it holds that
    \[
    (x + \eps)^2 \le \begin{cases}
        x^2 \left(1 + \frac{\eps}{h}\right)^2, & h \le x, \\
        x^2 + 2 h \eps + \eps^2, & h > x .
    \end{cases}
    \]
    Thus, the union bound and Theorem~2.7 by~\citet{gotze2017gaussian} yield
	\begin{align*}
		\P\left\{x \le \norm{G}_H \le x + \eps\right\}
		\le & \P\left\{x^2 \le \norm{G}_H^2 \le x^2 + 2 h \eps + \eps^2\right\} + \P\left\{x \le \norm{G}_H \le x \left(1 + \frac{\eps}{h}\right) \right\} \\
		\le & \CONST \varkappa(\o{K}) \left(h \eps + \eps^2 + \frac{\eps}{h} \tr(\o{K})\right) 
		\le \CONST \varkappa(\o{K}) \left(\eps \sqrt{\tr(\o{K})} + \eps^2\right),
	\end{align*}    
	where the last inequality is ensured by $h = \sqrt{\tr\o{K}}$. 
	The above inequality can be rewritten as
	\[
	\P\left\{x \le \norm{G}_H \le x + \eps\right\} \le \CONST \gamma(\o{K}) \left(\frac{\eps}{\sqrt{\tr(\o{K})}} + \frac{\eps^2}{\tr(\o{K})}\right).
	\]
	Since $\gamma(\o{K}) \ge 1$ and the probability on the l.h.s.\ is bounded by $1$, it is enough to consider the case $\eps \le \sqrt{\tr(\o{K})}$. Thus, we obtain
	\[
	\P\left\{x \le \norm{G}_H \le x + \eps\right\} 
	\le \CONST \gamma(\o{K}) \frac{\eps}{\sqrt{\tr(\o{K})}}.
	\]
\end{proof}

\section{Generalized bootstrap in the Bures--Wasserstein space}
\label{sec:boot_validity}

Before we state the generalized bootstrap problem, we will specify framework presented in Section~\ref{seq:intro_gen_boot} for Bures--Wasserstein barycenters. Recall that the barycenter map
$\mathscr{B}$ \eqref{def:bar_map} is uniquely defined (Theorem~2.1 in~\citep{kroshnin2019statistical}).
Thus, according to Corollary~5 in \citep{le2017existence} it is continuous w.r.t.\ the $2$-Wasserstein metric on the subspace of probability measures $\mathcal{P}\bigl(\H_{++}(d)\bigr) \subset \mathcal{M}_{2}\bigl(\H_{++}(d)\bigr)$. Hence, by homogeneity $\mathscr{B}(\mu) = \mathscr{B}\left(\frac{\mu}{\mu(\H_{++}(d))}\right)$. Consequently, $\mathscr{B}$ is measurable on $\mathcal{M}\bigl(\H_{++}(d)\bigr)$. 

Let $(\Omega, \mathscr{F}, \mathbb{P})$ be a probability space and $\mu \in \mathcal{M}_{2}\bigl(\H_{++}(d)\bigr)$ be a random measure with distribution $\mathscr{P}$. Let $B \eqdef \mathscr{B}(\E \mu)$ and set $B_{\mu} = \mathscr{B}(\mu)$. The generalized bootstrap aims to
select a suitable random measure $\hat{\mu}$ depending on $\mu \in \mathcal{M}_{2}\bigl(\H_{++}(d)\bigr)$, such that $\bw(B, B_{\mu}) \overset{\mathrm{d}}{\approx} \bw(B_{\mu}, B_{\hat{\mu}})$. To prove the validity of this approximation, we will follow the framework presented in Section~\ref{seq:intro_gen_boot}. The first step is the linearization on the Bures--Wasserstein distance.

The key role in linearization of $\bw(B, B_\mu)$ will play $T_{\mu} = \mathscr{T}(\mu)$ and $\o{F}_{\mu} = \mathscr{F}(\mu)$ introduced by \eqref{def:T_mu}. Note that $\mu$ is random, consequently $T_\mu$ and $\o{F}_{\mu}$ are random as well. By construction, $\E T_{\mu} = 0 $. Let $\o{F}\eqdef -\E\o{F}_{\mu} $. The following assumptions are crucial for control of the tail behaviour of $T_{\mu}$ and $\o{F}_{\mu}$. We assume there exist functions $\eps_{T}(\x) > 0$ and $\eps_{\F}(\x) > 0$, such that,
\begin{align}
    \label{asm:T}
    &\P\left\{\norm*{\Tmw}_{\F} > \eps_{T}(\x) \right\} \le \CONST e^{-\x},  \tag{$T$}\\
    \label{asm:F}
    &\P\left\{r(\o{F}, \o{F}_{\mu}) > \eps_{\F}(\x)\right\} \le \CONST e^{-\x}
    \tag{$F$},
\end{align}
where $r(\o{F}, \o{F}_{\mu}) \eqdef \norm{\o{F}^{-1/2}\o{F}_{\mu} \o{F}^{-1/2} - \o{I}}$. 
Next, to ensure the Gassuan approximation of $\sqrt{n}\bw(B, B_n)$, we assume that there exists centred Gaussian vector $Z \sim \ND(0, \o{\Upxi})$ such that,
\begin{equation}
    \label{asm:GAR}
    \sup_{z>0}\abs*{\P\left\{ \norm*{\o{F}^{-1} \Tmw}_{\F} \le z \right\} 
    - \P\left\{\norm*{Z}_{\F} \le z \right\}} \le \eps_{G}. \tag{$G$}
\end{equation}

Assumptions \eqref{asm:T}, \eqref{asm:F}, and \eqref{asm:GAR} combined together ensure that $\bw(B_\mu, B) \overset{\mathrm{d}}{\approx} \norm{\A Z }_\F$, where
$ \A \eqdef \left(- \frac{1}{2} \dT{B}{B}\right)^{1/2}$. Before writing this result, we get some trivial but useful bounds. 

\begin{lemma}
\label{lemma:aux_bounds}
    Let Assumptions~\eqref{asm:T} and~\eqref{asm:F} be fulfilled. Then with probability at least $1 - \CONST e^{-\x}$ it holds
    \begin{gather}
        \label{def:aux_bounds_mu}
        r(B, B_{\mu}) \leq c_B \eps_{T}(\x) \quad 
        r(\o{F}, \o{F}_{\mu}) \leq \eps_{F}(\x), \quad 
        \rho \leq \eps(\x).
    \end{gather}

    If Assumptions~\eqref{asm:T_boot} and \eqref{asm:F_boot} hold as well, then, conditioned on any $\mu \in \mathscr{A}_{t}$, it holds with probability $1 - \CONST e^{-\x}$,
    \begin{gather}
    \label{def:aux_bounds_nu}
        r(B, B_{\hat{\mu}}) \leq c_B \hat{\eps}_{T}(\x, \t), 
        \quad 
        r(\o{F}, \o{F}_{\hat{\mu}}) \leq \hat{\eps}_{\F}(\x, \t),
        \quad
        \hat{\rho} \leq \hat{\eps}(\x, \t).
    \end{gather}
\end{lemma}

\begin{proof}
    The proof trivially follows from Lemma~\ref{lemma:concentrationQ_w}.
\end{proof}

\begin{lemma}[Gaussian approximation for $\bw(B_{\mu}, B)$]
\label{lemma:gar_boot}
    Let Assumptions~\eqref{asm:T},~\eqref{asm:F}, and~\eqref{asm:GAR} be fulfilled. Denote $\A \eqdef \left(- \frac{1}{2} \dT{B}{B}\right)^{1/2}$ and set $\oUpxi' \eqdef \A\oUpxi \A$.
    Then
    \begin{gather*}
        \sup_{z \ge 0}\abs*{\P\left\{ \bw(B_{\mu}, B) \le z\right\} - \P\left\{\norm{\A Z}_{\F} \le z\right\}}
    	\le \mathcal{E}, \nonumber \\
    	\label{def:Omega_BW_w}
        \mathcal{E}  
        \eqdef \eps_{G} + \CONST \cdot \inf_{\x \in \mathscr{X}} \left\{e^{-\x} + \sqrt{\kappa(B)}\gamma(\oUpxi') \eps(\x) 
        \right\}, \quad
        \mathscr{X} \eqdef \left\{\x :~ \eps(\x) \le \frac{1}{6 \sqrt{ \kappa(B)}}  \right\}.
    \end{gather*}
\end{lemma}

\begin{proof}
    We set $X = \bw(B_{\mu}, B)$, 
    $Y = \norm*{\A \o{F}^{-1} \Tmw}_{\F}$, 
    $G = \A Z$. 
    Assumption~\eqref{eq:comp_aux} holds due to \eqref{bound:B_OT_BW} and \eqref{def:aux_bounds_mu}:
    \[
    \abs{X - Y} \le 3 \sqrt{\kappa(B)} \rho {Y} 
    \le 3 \sqrt{\kappa(B)} \eps(\x) Y,
    \]
    the last inequality holds with 
    probability at least $1 - \CONST e^{-\x}$ for all $\x $, such that $\eps(\x) \le \frac{1}{6 \sqrt{\kappa(B)}}$.
    
    Assumption (GAR-II) is fulfilled due to Assumption~\eqref{asm:GAR}.
    The result follows immediately from Lemma~\ref{lemma:GAR_general}. 
\end{proof}

Next, we are to ensure a similar result for the measure $\hat{\mu}$.
Let $T_{\hat{\mu}} \eqdef T_{\hat{\mu}}(B_\mu)$ and $\o{F}_{\hat{\mu}} \eqdef \o{F}_{\hat{\mu}}(B)$. Recall that $\hat{\mu}$ might depend on $\mu$. We assume that there exists a Borel set $\mathscr{A}_{t} \subset \mathcal{M}(\H_{++}(d))$, such that $\P\{\mu \in \mathscr{A}_{t} \} \ge 1 - \CONST e^{-t}$). The following assumptions hold on this event. We assume there exist functions $\hat{\eps}_{T}(\x,\t) > 0$ and $\hat{\eps}_{F}(\x,\t) > 0$, such that, 
\begin{align}
    \label{asm:T_boot}
    &\P\left\{\norm*{T_{\hat{\mu}} - T_{\mu}}_{\F} > \hat{\eps}_{T}(\x,\t) ~|~\mu \right\} \le \CONST e^{-\x}, \tag{$\hat{T}$}\\
    \label{asm:F_boot}
    &\P\left\{r(\o{F}, \o{F}_{\hat{\mu}}) > \hat{\eps}_{F}(\x,\t) ~|~\mu\right\} \le \CONST e^{-\x}
    \tag{$\hat{F}$},
\end{align}
where $r(\o{F}, \o{F}_{\hat{\mu}}) \eqdef \norm{\o{F}^{-1/2}\o{F}_{\hat{\mu}} \o{F}^{-1/2} - \o{I}}$. 
Finally, let $Z_{\mu} \sim \ND(0, \o{\Upxi}_{\mu})$ be centred Gaussian matrix such that,
\begin{equation}
\label{asm:GAR_boot}
    \sup_{z>0}\abs*{\P\left\{\norm*{ \o{F}^{-1} \left( T_{\hat{\mu}} - \Tmw \right)}_{\F} \le z ~|~\mu \right\} 
    - \P\left\{\norm*{Z_{\mu}}_{\F} \le z ~|~\mu\right\}} \le \hat{\eps}_{G}(\t). \tag{$\hat{G}$}
\end{equation}
Assumptions \eqref{asm:T_boot}, \eqref{asm:F_boot}, and \eqref{asm:GAR_boot} ensure $\bw(B_{\hat{\mu}}, B_\mu) \overset{\mathrm{d}}{\approx} \norm{\A Z_\mu }_\F$.

\begin{lemma}[Gaussian approximation for $\bw(B_{\mu}, B_{\hat{\mu}})$]
\label{lemma:gar_boot_BW}
    Denote $\oUpxi'_\mu \eqdef \A\o{\Upxi}_{\mu}\A$. Let Assumptions~\eqref{asm:T_boot},~\eqref{asm:F_boot}, and~\eqref{asm:GAR_boot} be fulfilled. Then, on the event \\
    $\left\{\mu \in \mathscr{A}_t, \;\rho \le \frac{1}{12\sqrt{\kappa(B)}}\right\}$
    it holds that
    \begin{gather}
        \sup_{z \ge 0} \abs*{
        \P\left\{
        \bw(B_{\hat{\mu}}, B_{\mu}) \le z ~\middle|~\mu\right\} - \P\left\{\norm{\A Z_{\mu}}_{\F} \le z ~\middle|~\mu
        \right\}}
        \le \hat{\mathcal{E}}(\t),\nonumber \\ 
        \hat{\mathcal{E}}(\t) 
        \eqdef \hat{\eps}_{G}(\t) + \CONST \cdot \inf_{\x \in \hat{\mathscr{X}}(\t)} \left\{ e^{-\x} + \gamma(\o{\Upxi}'_{\mu})\sqrt{\kappa(B)} \left(\rho + \hat{\eps}(\x, \t)\right)
        \biggl( \frac{\norm{\A}\norm{ \o{F}^{-1} T_{\mu}}_{\F}}{\sqrt{\tr(\oUpxi'_\mu)}} + 1 \biggr) \right\}, \label{def:Omega_BW_boot} 
    \end{gather}
    where $\hat{\mathscr{X}}(\t) \eqdef \left\{\x:~ \hat{\eps}(\x, \t) \le \frac{1}{12 \sqrt{ \kappa(B)}}\right\}.$
\end{lemma}

\begin{proof}
    We have to check Assumptions~\eqref{eq:comp_aux} and \eqref{eq:gar_aux}. We set
    $X = \bw(B_{\hat{\mu}}, B_{\mu})$, $Y = \norm*{\A\o{F}^{-1}\left( T_{\hat{\mu}} -\Tmw\right)}_{\F}$, $G = \A Z_{\mu}$. Assumption~\eqref{eq:comp_aux} is valid due to Corollary~\ref{def:corollary_bounds_F_BW} and assumptions $r\le \frac{1}{2}, \hat{r}\le \frac{1}{2}$,
    \begin{align*}
        |X - Y| \le 6\kappa(\A) \left(\hat{\rho} + \rho\right)Y 
        +  4 \left( \hat{\rho} + \rho \right)\norm{\A}  \norm{ \o{F}^{-1} T_{\mu} }_{\F}.
    \end{align*}

    Note that Lemma~\ref{lemma:spec_A} ensures $\kappa(\A) = \sqrt{\kappa(B)}$.
    Using Lemma~\ref{lemma:aux_bounds}, we get 
    \begin{align*}
        |X - Y| \le 6 \sqrt{\kappa(B)} \left(\rho + \hat{\eps}(\x, \t)\right) Y +  4  \left(\rho + \hat{\eps}(\x, \t)\right)\norm{\A} \norm{\o{F}^{-1} T_{\mu}}_{\F}.
    \end{align*}
    The inequality holds with probability at least $1 - \CONST e^{-\x}$ for all $\x $ such that $\hat{\eps}(\x, \t) + \rho \le \frac{1}{6\sqrt{\kappa(B)}}$. Since by assumption of the lemma $\rho \le \frac{1}{12 \sqrt{\kappa(B)}}$, we get $\hat{\eps}(\x, \t) \le \frac{1}{12 \sqrt{\kappa(B)}}$.

    Assumption (GAR-II) is valid due to Assumption~\eqref{asm:GAR_boot} with $\Delta = \hat{\eps}_{G}(\t)$. 
    The claim follows.
\end{proof}

To complete the proof of generalized bootstrap validity, i.e., to show that $\bw(B, B_{\mu})\overset{\mathrm{d}}{\approx} \bw(B_{\hat{\mu}}, B_\mu)$, we assume that
\begin{equation}
    \label{asm:Sigma}
    \P\left\{\norm*{\oUpxi - \oUpxi_{\mu}}_1 > \eps_{\Upxi}(\x) \right\} \le \CONST e^{-\x}  \tag{$\Upxi$},
\end{equation}
with $\norm{\cdot}_{1}$ being $1$-Schatten norm.
This assumption entails
$\norm*{\A Z}_{\F}
\overset{\mathrm{d}}{\approx}
\norm*{\A Z_{\mu}}_{\F}$. Now we are ready to present the main result. Let us denote
\begin{equation*}
    \eps(\x) \eqdef 6 \sqrt{\kappa(\o{F})} \left( 
    c_B \eps_{T}(\x) + \eps_{\F}(\x) \right),
    \quad
    \hat{\eps}(\x, \t) \eqdef 6 \sqrt{\kappa(\o{F})} \left( c_B\hat{\eps}_{T}(\x, \t) + \hat{\eps}_{F}(\x, \t)  \right),
\end{equation*}
with $c_B$ coming from \eqref{def:cb}.

\begin{theorem}[Bootstrap validity]
\label{theorem:boot_BW}
    Let all Assumptions~\eqref{asm:T}--\eqref{asm:Sigma} be fulfilled. Denote $\oUpxi' \eqdef \A\oUpxi\A$ and let $\t \ge 0$ be such that
    \begin{equation}
    \label{eq:bound_upxi_bw}
        \eps_{\Upxi}(\t)
        \le \CONST \frac{\Lambda_2^2\left(\oUpxi'\right)}{\norm{\A}^2 \norm*{\oUpxi'}}.
    \end{equation}
    Then with probability at least $1 - \CONST e^{-\t}$,
    \begin{gather*}
        \sup_{z\ge 0} \abs*{\P\left\{\bw(B_{\mu},B) \le z\right\}
        - \P\left\{ \bw(B_{\hat{\mu}},B_{\mu}) \le z ~\middle|~\mu\right\}} \leq \Gamma(\t),
	\end{gather*}
    \begin{align*}
        \Gamma(\t) & =  \CONST \varkappa(\oUpxi')\norm{\A}^2 
        \eps_{\Upxi}(\t) + \eps_{G} + \CONST \cdot \inf_{\x \in \mathscr{X}} \left\{e^{-\x} + \gamma(\oUpxi')\sqrt{\kappa(B)} \eps(\x) \right\} + \hat{\eps}_{G}(\t) \\
        &\quad + \CONST \cdot\inf_{\x \in \hat{\mathscr{X}}(\t)}\left\{ e^{-\x} + \gamma(\oUpxi')\sqrt{\kappa(B)}  \left(\eps(\t) + \hat{\eps}(\x, \t)\right)
        \left( \tfrac{\norm{\A}  \norm{ \o{F}^{-1}}}{\sqrt{\tr(\oUpxi')}}\eps_{T}(\t) + 1 \right)\right\},
    \end{align*}
    where $\mathscr{X} \eqdef \left\{\x :~ \eps(\x) \le \frac{1}{6 \sqrt{ \kappa(B)}}  \right\}$, and 
    $
    \hat{\mathscr{X}}(\t) \eqdef \left\{\x:~ \hat{\eps}(\x, \t) \le \frac{1}{12 \sqrt{ \kappa(B)}}\right\}.
    $
\end{theorem}

\begin{proof}
    Lemma~\ref{lemma:gar_boot} and~\ref{lemma:gar_boot_BW} ensure that
    for all $z \ge 0$ with probability at least $1 - \CONST e^{-\t}$, it holds
	\begin{gather*}
    	\abs*{\P\left\{\bw(B_{\mu}, B) \le z\right\} - \P\left\{\norm*{\A Z}_{\F} \le z\right\}}
    	\le \mathcal{E},\\
    	\abs*{\P\left\{\bw(B_{\hat{\mu}}, B_{\mu}) \le z~|~\mu \right\} - \P\left\{\norm*{\A Z_{\mu}}_{\F} \le z~|~\mu\right\}}
    	\le \hat{\mathcal{E}}(\t).
	\end{gather*}
	This yields
	\begin{align}\label{eq:3} 
        &\abs*{\P\left\{\bw(B_{\mu},B) \le z\right\}
        - \P\left\{\bw(B_{\hat{\mu}},B_{\mu}) \le z \;\middle|\;\mu\right\}} \\
        &\le \abs*{\P\left\{\norm*{\A Z}_{\F} \le z\right\} - \P\left\{\norm*{\A Z_{\mu}}_{\F} \le z \;\middle|\; \mu\right\}} + \mathcal{E} + \hat{\mathcal{E}}(\t).
	\end{align}

    First, we consider $\hat{\mathcal{E}}(\t)$ coming from Lemma~\ref{lemma:gar_boot_BW}:
    \begin{align*}
        \hat{\mathcal{E}}(\t) = \hat{\eps}_{G}(\t) + \CONST \cdot \inf_{\x \in \hat{\mathscr{X}}(\t)} \left \{e^{-\x} + \sqrt{\kappa(B)}\gamma(\o{\Upxi}'_{\mu}) \left(\rho + \hat{\eps}(\x, \t)\right)
        \left( \frac{\norm{\A}\norm{ \o{F}^{-1} T_{\mu}}_{\F}}{\sqrt{\tr(\o{\Upxi}'_{\mu})}} + 1 \right) \right\}.
    \end{align*}
    Lemma~\ref{lemma:aux_bounds} and Assumption~\eqref{asm:T}
    ensure with probability at least $1-\CONST e^{-\x'}$, that $\rho \le \eps(\x')$,\\
    $\norm{ \o{F}^{-1} T_{\mu}}_{\F} \le \norm{ \o{F}^{-1}} \eps_{T}(\x')$. Further, condition \eqref{eq:bound_upxi_bw} and Lemma~\ref{lemma:bounds_kappa} ensure
    \begin{equation}
    \label{eq:yet_another_name}
        \varkappa( \oUpxi'_{\mu}) \le 2 \varkappa\left(\oUpxi' \right),
        \quad 
        \tr (\oUpxi'_{\mu}) \le \frac{5}{4} \tr\left(\oUpxi' \right).     
    \end{equation}
    
    Taking into account the definition of $\gamma(\cdot)$ \eqref{def:kappa}, we get that with probability at least $1-e^{-\x'}$,
    \begin{align*}
        \hat{\mathcal{E}}(\t) \le \hat{\eps}_{G}(\t) + \CONST \inf_{\x \in \hat{\mathscr{X}}(\t)} \left \{e^{-\x} + \sqrt{\kappa(B)}\gamma(\o{\Upxi}')  \left(\eps(\x') + \hat{\eps}(\x, \t)\right)
        \left( \frac{\norm{\A}\norm{ \o{F}^{-1} }}{\sqrt{\tr(\o{\Upxi}')}}\eps_{T}(\x') + 1 \right) \right\}.
    \end{align*}

    Next, we have to bound 
    $\abs*{\P\left\{\norm*{\A Z}_{\F} \le z\right\} - \P\left\{\norm*{\A Z_{\mu}}_{\F} \le z ~|~\mu\right\}}.$ 
    Recall that $\A$ is self-adjoint. Corollary~2.3 by~\citet{gotze2017gaussian} ensures,
	\begin{align}
	    \label{eq:4}
    	\sup_{z\ge 0}\abs*{\P\left\{\norm*{\A Z}_{\F} \le z\right\} - \P\left\{\norm*{\A Z_{\mu}}_{\F} \le z~|~\mu\right\}}  
        \le \CONST 
        \left(
        \varkappa\left(\oUpxi'\right)
        + \varkappa\left(\oUpxi'_{\mu}\right)
        \right) 
        \norm*{\oUpxi' - \oUpxi'_{\mu}}_1. 
	\end{align}

    Taking into account \eqref{eq:yet_another_name} and Assumption~\eqref{asm:Sigma}, we get with probability at least $1-\CONST e^{-\y}$
    \[
    \norm*{\oUpxi' - \oUpxi'_{\mu}}_1
    \le \norm*{\A}^2 \norm*{\oUpxi - \oUpxi_{\mu}}_1 \le \norm*{\A}^2 \eps_{\Upxi}(\y).
    \]
    
    Combining these bounds with \eqref{eq:3} and \eqref{eq:4} and setting $\y = \x' = \t$, we get the result.
\end{proof}

\section{Multiplier bootstrap in the Bures--Wasserstein space}
\label{sec:proof_sub_gauss}

\begin{proof}[Proof of Lemma~\ref{lemma:aux_behaviour}]
    First, Assumption~\eqref{asm:subexp} ensures,
    \begin{equation}
        \label{eq:useful_one}
        \norm*{\norm*{S}^{1/2}}_{\psi_2} \le \norm*{\sqrt{\tr S}}_{\psi_2} < +\infty.
    \end{equation}

    Now we recall that $T_{B}^{S} = B^{-1/2} \left(B^{1/2} S B^{1/2}\right)^{1/2} B^{-1/2}.$ Using \eqref{eq:useful_one}, we get\\ $ \norm*{T_{B}^{S}} \le \frac{\lmax^{1/2}(B)}{\lmin(B)} \norm*{S}^{1/2}.$ Thus, $\norm{\norm{T_{B}^{S}}_{\F}}_{\psi_{2}} \le d\cdot v_S. $
    Finally, we use the result (III) in Lemma~\ref{lemma:dT} that ensures 
    \begin{equation*}
        \label{eq:bound_on_dT}
        \norm*{\dT{B}{S}} \le \frac{\lmax^{1/2}\left(S^{1/2} B S^{1/2}\right)}{2 \lmin^2(B)} \le \frac{\lmax^{1/2}(B)}{2 \lmin^2(B)} \norm*{S}^{1/2}.
    \end{equation*}
    Combining this fact with~\eqref{eq:useful_one}, we get the result.   
\end{proof}

Before validating the bootstrap assumptions, we prove two auxiliary lemmas. The first lemma deals with concentration of sub-exponential random variables. The first two results are well-known and we provide them for the sake of completeness.

In the following, we will often use the auxiliary concentration results. For the sake of completeness, we provide them below. Let $(x)_{+} = \max\{0, x\}$ and $\log(x) = \max\{1, \ln x\}$.

\begin{statement}[Theorem~2.1 \citep{kroshnin2024bernstein}]
\label{stat:iid_concentration}
    Fix $\alpha \ge 1$. Let $\o{X}_1, \dots, \o{X}_n \in \H(d)$ be independent and $\E \o{X}_i = 0$ for all $i$. Define
    \begin{gather*}
    K \eqdef \max_{i} \norm*{ \norm{\o{X}_i } }_{\psi_{\alpha}}, 
    \quad
    U^2 \eqdef \sumi \norm*{ \norm{\o{X}_i } }^2_{\psi_{\alpha}},
    \quad
    \sigma^2 \eqdef \norm*{ \sumi \E \o{X}^2_i},
    \quad
     z \eqdef \left(\log \frac{U}{\sigma}\right)^{1/\alpha} .
    \end{gather*}
    Then, with probability at least $1 - 2 d e^{-\x}$,
    \[
    \norm*{\frac{1}{n} \sumi \mathbf{X}_i} 
    \lesssim \sigma \sqrt{\frac{\x}{n}} + K z \frac{\x}{n}.
    \]
\end{statement}

\begin{statement}[Corollary~3.5 \citep{kroshnin2024bernstein}]
\label{stat:norm_concentration}
    Fix $\alpha \ge 1$. Let $(\mathcal{H}, \norm{\cdot}_{H})$ be a separable Hilbert space and assume $X_1, \dots, X_n \in \mathcal{H}$ are independent random variables such that $\E X_i = 0$. Define
    \begin{gather*}
    K \eqdef \max_{i} \norm*{\norm*{X_i}_{H}}_{\psi_{\alpha}},
    \quad
    U^2 \eqdef \sumi \norm*{\norm*{X_i}_{H}}^2_{\psi_{\alpha}},
    \quad
    \sigma^2 \eqdef \sumi \E \norm*{X_i}^2_{H}, 
    \quad
    z \eqdef \left(\log \frac{U}{\sigma}\right)^{1/\alpha} .
    \end{gather*}
    Then for $\x\ge 1$ with probability at least $1-e^{-\x}$ 
    \[
    \norm*{\frac{1}{n}\sumi X_i}_{H} \lesssim  \sigma \sqrt{\frac{\x}{n}} +  K z \frac{\x}{n}.
    \]
\end{statement}

\begin{statement}
    \label{lemma:subexp_bounds}
    Fix $\alpha > 0$. Let $X_1, \dots, X_n \ge 0$ be i.i.d.\ random variables, such that $\sigma^2 = \E X^2_1$, $v = \norm*{X_1}_{\psi_{\alpha}}$. Let $z \eqdef \left(\log \frac{v}{\sigma}\right)^{1/\alpha}$. Then for any $p \ge 2$ and $\x \ge 0$ it holds with probability at least $1-2e^{-\x}$
    \[
    \frac{1}{n}\sumi X^{p}_i \lesssim \sigma^2( v z)^{p-2} + v^p\left(z^p + (\x + \log n)^{\frac{p}{\alpha} - 1} \right)\frac{\x}{n}.
    \]
    Moreover, $\max_i X_i \le v(\x + \ln 2n)^{1/\alpha}.$
\end{statement}

\begin{proof}
    Theorem~2.1 from \citep{kroshnin2024bernstein} ensures that
    \begin{align*}
        \frac{1}{n}\sumi X^{p}_i & \lesssim \E X^p_i + \sqrt{\frac{\x}{n}\E X^{2p}_i} + v^p\left( \log \frac{v^{2p}}{\E X^{2p}_i}\right)^{\frac{p}{\alpha}}\frac{\x}{n}
        + v^p(\x + \ln n)^{\frac{p}{\alpha} -1 }\frac{\x}{n}\\
        & \lesssim \sigma^2\left(v\left( \log \frac{v}{\sigma}\right)^{\frac{1}{\alpha}}  \right)^{p-2} 
        + \sigma\left(v\left( \log \frac{v}{\sigma}\right)^{\frac{1}{\alpha}}  \right)^{p-2}\sqrt{\frac{\x}{n}}\\
        &
        + v^p\left( \left( \log \frac{v}{\sigma}\right)^{\frac{p}{\alpha}} + (\x + \ln n)^{\frac{p}{\alpha} -1 } \right) \frac{\x}{n}\\
        & \lesssim \sigma^2\left(v z  \right)^{p-2}
        + 
        v^p\left( z^p + (\x + \ln n)^{\frac{p}{\alpha} -1 } \right) \frac{\x}{n}.
    \end{align*}

    To get the second result, we use a well-known line of reasoning,
    \[
        \P\left\{\max_i X_i \ge \t\right\} 
        = \P\left\{\bigcup_{i} \left\{X_i \ge \t\right\} \right\} 
        \le 2 n e^{-  (\t / v)^{\alpha}} = e^{\ln(2 n) - (\t / v)^{\alpha}}. \qedhere
    \]
\end{proof}

Throughout the rest of the text, we denote $T_i \eqdef T^{S_i}_{B} - I.$ We also write down explicitly all the terms.
The $\mathscr{T}$-mappings are written as
\begin{equation*}
    T_{\mu} \eqset \mathscr{T}(P_n) = \frac{1}{n} \sumi T_i,
    \quad 
    T_{\hat{\mu}} \eqset \mathscr{T}(P_w) =  \frac{1}{n} \sumi w_i T_i,
\end{equation*}
and the $\mathscr{F}$-mappings are
\begin{gather*}
\label{def:subg_F}
    \o{F} = \mathscr{F}(P) = -\E\dT{B}{S},
    \quad
    \o{F}_{\mu} = \mathscr{F}(P_n) = \frac{1}{n} \sumi \dT{B}{S_i},\\
    \o{F}_{\hat{\mu}} = \mathscr{F}(P_w) = \frac{1}{n} \sumi  w_i\dT{B}{S_i}.
\end{gather*}

The vectors used for Gaussian approximation are $Z\sim \ND(0, \oUpxi)$ and $Z_{\mu}\sim \ND(0, \oUpxi_{\mu})$, where 
\begin{gather*}
    \oUpxi \eqdef \frac{1}{n}\o{F}^{-1} \left[\E \left(T^{S}_{B} - I \right) \otimes \left(T^{S}_{B} - I \right) \right] \o{F}^{-1},\\
    \oUpxi_{\mu} \eqdef \frac{1}{n}\o{F}^{-1}\left[
         \frac{1}{n} \sumi \left(T^{S_i}_{B} - I \right) \otimes \left(T^{S_i}_{B} - I \right) \right] \o{F}^{-1},
\end{gather*}
with $\otimes$ denoting the tensor product. Throughout this section, we denote 
\[
\sigma^2_T \eqdef \E \left\|T_1 \right\|^2_{\F},
\quad
C_T \eqdef \frac{v^2_T}{\sigma^2_T} \log\frac{v_T}{\sigma_T}.
\]

\begin{lemma}[Assumption~\eqref{asm:T}]
\label{prop:concentr_T}
    Assumption~\eqref{asm:subexp} ensures that for all $\x \ge 1$ and $n \gtrsim C_T \x$ that
    $\eps_{T}(\x) \lesssim \sigma_T\sqrt{\frac{\x}{n}}.$
\end{lemma}
\begin{proof} 
    Let $\overline{T} \eqset \frac{1}{n}\sumi T_i$. We apply Statement~\ref{stat:norm_concentration} with $\alpha = 2$ and get with probability at least $1 -  e^{-\x}$,
    \[
    \norm{\overline{T} }_{\F} \lesssim \sigma_T \sqrt{ \frac{\x}{n}} +  v_T \sqrt{\log \left( \frac{v_T}{\sigma_T}\right) } \frac{ \x }{n}.
    \]
    By substituting the condition on $n$, we get the result.
\end{proof} 

\begin{lemma}[Assumption~\eqref{asm:T_boot}]
\label{prop:concentr_T_boot}
    Set $C_w \eqdef (v_w \log v_w)^2$.
    Assumptions~\eqref{assumption:subgauss} and \eqref{asm:subexp} ensure that for all $\x, \t \ge 1$
    \[
    \hat{\eps}_T(\x,\t) \lesssim \sigma_{T} \sqrt{\frac{\x}{n}}
    \] 
    whenever $n \gtrsim C_w C_T \x (\t + \log n)$.
\end{lemma}

\begin{proof}
    First, we denote $\overline{T} \eqset T_{\hat{\mu}} - T_{\mu} = \frac{1}{n} \sumi (w_i-1) T_i.$ Note that $\overline{T}$ is centred in the bootstrap world, i.e., $\E_{w} \overline{T} = 0$. Further, $\overline{T}$ sub-Gaussian due to Assumption~\eqref{assumption:subgauss}. We apply Statement~\ref{stat:norm_concentration} and get with probability at least $1-e^{-\x}$
    \begin{equation}
    \label{eq:bound_T_F_boot}
        \norm*{\overline{T}}_{\F} \lesssim  \sqrt{\frac{1}{n}\sumi \norm{ T_i}^2_{\F} \frac{\x}{n}} + \max_{i}\norm*{\norm{(w_i - 1) T_i}_{\F}}_{\psi_{1} } z^2 \frac{\x}{n},
    \end{equation}
    with $z^2 = \log  \sqrt{ \frac{ \sumi \norm*{\norm{(w_i - 1) T_i}_{\F}}^2_{\psi_{1}}}{ \sumi \E_{w}\norm{(w_i - 1) T_i}^2_{\F} } }= \log v_w.$ Thus,
    \[
    \norm*{\overline{T}}_{\F} \lesssim  \sqrt{\frac{1}{n}\sumi \norm{ T_i}^2_{\F} \frac{\x}{n}} + v_w\log v_w\max_{i}\norm{ T_i}_{\F} \frac{\x}{n}.
    \]

    Now we apply Lemma~\ref{lemma:subexp_bounds} with $\alpha = p = 2$ and get with probability at least $1 - 2e^{-\t}$
    \begin{equation}
    \label{eq:bound_bound_bound!}
        \frac{1}{n} \sumi \norm*{T_i}_{\F}^2
        \lesssim \sigma^2_T + v^2_T  \log\left(\frac{v_T}{\sigma_T} \right)\frac{\t}{n} \lesssim \sigma^2_T.
    \end{equation}
    Moreover, $\max_i \norm*{T_i}_{\F} \lesssim v_T\sqrt{t + \log n}$. Thus, one can take
    \[
    \hat{\eps}(\x,\t) \lesssim \sigma_{T}\sqrt{\frac{\x}{n}} + v_w\log v_w v_T\sqrt{t + \log n} \frac{\x}{n} \lesssim \sigma_{T}\sqrt{\frac{\x}{n}}. \qedhere
    \]
\end{proof}

Now, we define the covariance of $T_i$ and its empirical counterpart,
\[
\Sigma \eqdef \E T_1 \otimes T_1,
\quad
\o{\Sigma}_{\mu} = \frac{1}{n} \sumi T_i \otimes T_i,
\]
with $\otimes$ being the tensor product.
And set $K_T \eqdef \norm*{\norm*{\o{\Sigma}^{-1/2} T_i }_{\F}}_{\psi_2} \le \norm{\o{\Sigma^{-1/2}}} v_T$,
$C_{G}\eqdef \left(\frac{K_T}{d}\right)^2 \log \frac{K_T}{d}.$

\begin{lemma}[Assumption~\eqref{asm:GAR}]
\label{lemma:GAR_subg}
    Under Assumption~\eqref{asm:subexp} it holds that 
    $\eps_{G} \lesssim d^3 \sqrt{\frac{C_G}{n}}.$
\end{lemma}

\begin{proof}
    The result follows from Theorem~1.1 by~\citet{bentkus_2003} applied to $X_i = \o{\Sigma}^{-1/2} T_i$ for all $i = 1, \dots, n$. Namely,
    \[
    \eps_{G} \lesssim\frac{1}{\sqrt{n}} {\E \norm*{\o{\Sigma}^{-1/2} T_i}_{\F}^3} \lesssim \frac{1}{\sqrt{n}} d^2 K_T \sqrt{\log 
    \frac{K_T}{d}},
    \]
    by Lemma~B.5 from \citep{kroshnin2024bernstein}, since $\E \norm*{\o{\Sigma}^{-1/2} T_i}_{\F}^2 = \dim \o{\Sigma} \le d^2$. 
\end{proof}

\begin{lemma}[Assumption \eqref{asm:GAR_boot}]
\label{lemma:gar_sub_boot}
    Let Assumptions~\eqref{asm:subexp} and \eqref{assumption:subgauss} be true. For sufficiently large $n$, such that
    \begin{gather}
    \label{eq:cond_on_n}
    n \gtrsim \max\left\{(\t + \log d) K_T^2\log K_T, (\t + \log d)^{3/2}\left(\frac{K_T}{d}\right)^2 \right\},
    \end{gather}
    it holds
    $\hat{\eps}_{G}(\t) \lesssim d^3 \sqrt{\frac{C_G}{n}}$.
\end{lemma}

\begin{proof}
    We denote $X_i = \frac{w_i - 1}{\sqrt{n}} \o{\Sigma}^{-1/2}_{\mu} T_i$. According to \citet{bentkus2005lyapunov}, $\hat{\eps}_{G}(\t)$ can be bounded with $(1 - \CONST e^{-\t})$-quantile of 
    \[
    \E_w \sumi \norm{X_i}_{\F}^3
    = \frac{1}{\sqrt{n}} \frac{1}{n} \sumi \E_w |w_i - 1|^{3} \norm*{\o{\Sigma_{\mu}}^{-1 / 2} T_i}^3_{\F} \lesssim  
    \frac{v_w \log v_w}{\sqrt{n}} \frac{1}{n} \sumi  \norm*{\o{\Sigma_{\mu}}^{-1 / 2} T_i}^3_{\F}.
    \]
    The last inequality is true because $\E|w_i - 1|^{3} \lesssim v_w \log v_w$.

    Now, our goal is to estimate $\lambda_{\max}(\o{I} - \o{\Sigma}^{-1/2} \o{\Sigma}_{\mu}\o{\Sigma}^{-1/2})$. 
    We will apply Bernstein inequality to random matrices $\o{I} - (\o{\Sigma}^{-1/2} T_i) \otimes (\o{\Sigma}^{-1/2} T_i)$.
    Notice that
    \begin{align*}
        \norm*{\E \left(\o{I} - (\o{\Sigma}^{-1/2} T_i) \otimes (\o{\Sigma}^{-1/2} T_i)\right)^2} 
        & = \norm*{\E \left((\o{\Sigma}^{-1/2} T_i) \otimes (\o{\Sigma}^{-1/2} T_i)\right)^2 - \o{I}}\\
        &\le \norm*{\E \left((\o{\Sigma}^{-1/2} T_i) \otimes (\o{\Sigma}^{-1/2} T_i)\right)^2}.
    \end{align*}
    For simplicity, set $Y_i = \o{\Sigma}^{-1/2} T_i$. Let $\Pi_{Y_i}$ be the orthogonal projector onto $\mathrm{span}(Y_i)$, so that $Y_i \otimes Y_i = \norm{Y_i}_\F^2 \Pi_{Y_i}$. Since $\E \norm{Y_i}_\F^2 \Pi_{Y_i} = \E Y_i \otimes Y_i = 
    \o{I}$, by Lemma B.5 in \citep{kroshnin2024bernstein} we obtain
    \begin{align*}
        \norm*{\E \left((\o{\Sigma}^{-1/2} T_i) \otimes (\o{\Sigma}^{-1/2} T_i)\right)^2} &= \norm*{\E(Y_i \otimes Y_i)^2} = \norm*{\E (\norm*{Y_i}_{\F} \Pi_{Y_i})^4}
        \\
        &\lesssim \norm*{I} K_T^2 \log \frac{K_T}{\norm{I}} =  K_T^2 \log K_T .
    \end{align*}
    
    Bernstein inequality (Theorem~1.4 from \cite{tropp2012user}) yields, with probability at least $1 -  e^{-\t}$,
    \begin{equation*}
    \label{eq:bound_on_sigma_subg}
        \lmax(\o{I} - \o{\Sigma}^{-1/2} \o{\Sigma}_{\mu}\o{\Sigma}^{-1/2}) \lesssim K_T \sqrt{\frac{\t + \log d}{n} \log K_T }
        + \frac{\t + \log d}{n}.
    \end{equation*}
    Condition \eqref{eq:cond_on_n} ensures that 
    \begin{equation}
    \label{eq:bound_lmax_nonsens}
        \lmax(\o{I} - \o{\Sigma}^{-1/2} \o{\Sigma}_{\mu}\o{\Sigma}^{-1/2}) \le \frac{1}{2},
    \end{equation} 
    thus $\norm{ \o{\Sigma}^{-1}_{\mu}} \le 2 \norm{\o{\Sigma}^{-1}}$.

    Finally, we have to estimate $\frac{1}{n} \sumi \norm*{\o{\Sigma}^{-1/2} T_i}^3_{\F}$.
    Note that $ \E \norm*{\o{\Sigma}^{-1/2} T_i}_{\F}^2 = \dim \o{\Sigma} \le d^2$.
    Applying Statement~\ref{lemma:subexp_bounds} with $p = 3$ and $\alpha = 2$, we get
    \begin{align*}
        \frac{1}{n} \sumi \norm*{\o{\Sigma}^{-1/2} T_i}^3_{\F} \lesssim d^2 K_T \sqrt{\log 
        \frac{K_T}{d}} + K_T^3\left(\left(\log \frac{K_T}{d} \right)^{3/2} + (\t + \log n)^{\frac{1}{2}} \right)\frac{\t}{n}
        \lesssim d^2 K_T \sqrt{\log \frac{K_T}{d}}.
    \end{align*}
\end{proof}

Now, we set $\sigma^2_F \eqdef  \norm*{\E \left[\dT{B}{S_1} - \o{F} \right]^2},
\quad
C_F \eqdef \frac{v^2_F}{\sigma^2_F} \log\frac{v_F}{\sigma_F}.$

\begin{lemma}[Assumption~\eqref{asm:F}]
\label{lemma:concentrationF} 
    Assumption~\eqref{asm:subexp} ensures that for all $\x > 0$ it holds that
    for sufficiently large $n$, $n \gtrsim C_F (\x + \log  d)$ that  $\eps_{F}(\x) \lesssim \norm{\o{F}^{-1}} \sigma_{F}  \sqrt{\frac{\x + \log d}{n}}.$
\end{lemma}

\begin{proof}
    We set $\o{X}_i = \dT{B}{S_i} - \E \dT{B}{S_i} = \dT{B}{S_i} - \o{F}$. By construction $\o{X}_1, \dots, \o{X}_n$ are i.i.d. Moreover, Lemma~\ref{lemma:aux_behaviour} ensures that $\norm{\o{X}_1}$ is sub-Gaussian with parameter $v_{F}$. Statement~\ref{stat:iid_concentration} ensures that with probability at least $1 - e^{-\x}$,
    \[
    \norm{\o{F} - \o{F}_{\mu}} \lesssim \sigma_{F} \sqrt{\frac{\x + \ln d}{n}} + v_F\left(\log \frac{ v_F}{\sigma_F} \right)^{1/2} \frac{\x + \ln d}{n} \lesssim \sigma_{F} \sqrt{\frac{\x + \log d}{n}}.
    \]
    Taking into account that $r(A, B) \le \norm{B^{-1}} \norm{A-B}$, we get the result.
\end{proof} 

\begin{lemma}[Assumption~\eqref{asm:F_boot}]
\label{lemma:concentration_F_flat}
    Let Assumptions~\eqref{asm:subexp} and~\eqref{assumption:subgauss} be true. For sufficiently large $n \gtrsim C_w C_F (\x + \log  d) (\t + \log n)$, it holds 
    \begin{align*}
     \hat{\eps}_{F}(\x, \t) \lesssim (\norm{\o{F}^{-1}}\sigma_F + 1)\sqrt{\frac{\x + \t + \log d}{n}}
    \end{align*}
\end{lemma}

\begin{proof} We set again $\o{X}_i = \dT{B}{S_i} - \E \dT{B}{S_i} = \dT{B}{S_i} - \o{F}$ and consider
    \begin{align*}
        \o{F}_{\hat{\mu}} - \o{F} 
        = \frac{1}{n}\sumi w_i \dT{B}{S_i} - \o{F} 
        = \frac{1}{n} \sumi (w_i-1) \o{X}_i + \o{F} \cdot \frac{1}{n} \sumi (w_i - 1) + \o{F}_{\mu} - \o{F}.
    \end{align*}
    Thus,
    \[
    r(\o{F}, \o{F}_{\hat{\mu}}) \le \norm{\o{F}^{-1}} \norm*{\frac{1}{n} \sumi (w_i-1) \o{X}_i} + \abs*{\frac{1}{n} \sumi (w_i - 1)} + r(\o{F}, \o{F}_{\mu}).
    \]

    Next, since the weights $w_i$ are sub-exponential with $\var(w)=1$, Statement~\ref{stat:iid_concentration} yields
    \[
    \left|\frac{1}{n}\sumi (w_i - 1)\right| \le \sqrt{\frac{\x}{n}} + \frac{\x}{n}v_w\log v_w
    \lesssim \sqrt{\frac{\x}{n}}.
    \]
    The last step is to bound $\frac{1}{n} \sumi (w_i-1) \o{X}_i$. We 
    apply Statement~\ref{stat:iid_concentration} and get with probability $1-e^{-\x}$
    \[
    \norm*{\frac{1}{n}\sumi (w_i-1) \o{X}_i } \lesssim \sqrt{\norm*{\frac{1}{n} \sumi \o{X}_i^2} \frac{\x + \log d}{n} } + v_w \log w \cdot \max_{i}\norm*{\o{X}_i}\frac{\x + \log d}{n} 
    \]
    
    Statement~\ref{lemma:subexp_bounds} ensures that with probability at least $1 - 2e^{-\t}$, $\max_{i}\norm*{\o{X}_i} \lesssim v_F \sqrt{\t + \log n}$.
    
    Now we set $\o{Y}_i = \o{X}_i^2$ and notice that
    \begin{gather*}
    \norm*{\E \left(\o{Y}_1 - \E\o{Y}_1 \right)^2} \le \norm*{\E\o{Y}_1^2} = \norm*{\E\o{X}_1^4} \lesssim \sigma^2_F v_F \log \frac{v_F}{\sigma_F},\\
    \norm*{\lambda_{\max}\left(\o{Y}_1 - \E\o{Y}_1 \right)_{+}}_{\psi_{1}} \le \norm*{\norm*{\o{Y}_1}}_{\psi_1} = \norm*{\norm*{\o{X}_1}^2}_{\psi_1} = v^2_F
    \end{gather*}
    Consequently, Statement~\ref{stat:iid_concentration} yields with probability at least $1-e^{-\t}$
    \[
    \norm*{
    \frac{1}{n}\sumi \o{X}_i^2} 
    \lesssim \norm*{\E \o{X}_i^2} + \sigma_F v_F \sqrt{ \frac{\t + \log d}{n} \log \frac{v_F}{\sigma_F} } 
    + v^2_F \frac{\t + \log d}{n}  \log \frac{v_F}{\sigma_F} \lesssim \sigma_F^2 .
    \]
    The last inequality holds due to the bound on $n$. Consequently,
    \begin{align*}
        \norm*{\frac{1}{n} \sumi (w_i-1) \o{X}_i } \lesssim \sigma_F \sqrt{\frac{\x + \log d}{n} } + v_w \log w \cdot v_F \sqrt{\t + \log n} \frac{\x + \log d}{n}
        \lesssim \sigma_F \sqrt{\frac{\x + \log d}{n} }.
    \end{align*}

    By Lemma~\ref{lemma:concentrationF}, $r(\o{F}, \o{F}_{\mu}) \le \eps_{F}(\t)$, with probability at least $1-e^{-\t}$.
    Combining all the bounds, we get
    \begin{align*}
        \hat{\eps}_{F}(\x,\t) \lesssim \eps_{F}(\t) + \norm{\o{F}^{-1}}\sigma_F \sqrt{\frac{\x + \log d}{n} } + \sqrt{\frac{\x}{n}} \lesssim (\norm{\o{F}^{-1}}\sigma_F + 1)\sqrt{\frac{\x + \t + \log d}{n}}.
    \end{align*}
\end{proof}

\begin{lemma}[Assumption~\eqref{asm:Sigma}]
\label{lemma:concentration_sigma_n}
    Assumption \eqref{asm:subexp} ensures for all sufficiently large $n \gtrsim \t C_T$, that
    \[
    \eps_{\Upxi}(\t) \lesssim\sigma^2_T\norm*{\o{F}^{-1}}^2 \sqrt{C_T \frac{\t + d^2}{n} }.
    \]
\end{lemma}

\begin{proof}
    Notice that
    $\norm*{\oUpxi - \oUpxi_\mu}_1 \le \norm*{\o{F}^{-1}}^2 \norm{\o{\Sigma}_{\mu} - \o{\Sigma}}_1$. 
    Further, $\E \norm*{T_1 \otimes T_1}^2_1 = \E\norm*{T_1}^{4}_{\F}$. Thus
    \[
    \E\norm{T_1\otimes T_1 - \o{\Sigma}}^2_1 
    \lesssim \E \norm*{T_1 \otimes T_1}^2_1 = \E\norm*{T_1}^{4}_{\F} 
    \lesssim \sigma^2_T v^2_T \log\frac{v_T}{\sigma_T}. 
    \]
    Moreover,
    $\norm*{\norm{T_1\otimes T_1 - \o{\Sigma}}_1 }_{\psi_1} 
    \le \norm*{\o{\Sigma}}_1 + \norm*{\norm*{T_1}^2_{\F}}_{\psi_1} \le 2\norm*{\norm*{T_1}^2_{\F}}_{\psi_1} \le 2v^2_T$. 
    Consequently, Corollary~3.5 from \citep{kroshnin2024bernstein} ensures that, with probability at least $1-e^{-\t}$,
    \[
    \norm{\o{\Sigma}_{\mu} - \o{\Sigma}}_1  \lesssim 
    \E \norm{\o{\Sigma}_{\mu} - \o{\Sigma}}_1 + \sigma_T v_T \sqrt{\frac{\t}{n} \log \frac{v_T}{\sigma_T}} + v_T^2 z \frac{\t}{n},
    \]
    where $z = \log \frac{v_T^2}{\sigma_T v_T \sqrt{\log \frac{v_T}{\sigma_T}}} \le \log \frac{v_T}{\sigma_T}$. Further,
    \begin{align*}
        \E\norm{\o{\Sigma}_{\mu} - \o{\Sigma}}_1 &\le d \E\norm{\o{\Sigma}_{\mu} - \o{\Sigma}}_2 \le d \sqrt{\E\norm{\o{\Sigma}_{\mu} - \o{\Sigma}}^2_2} = d \sqrt{\frac{1}{n} \E\norm*{T_1\otimes T_1 - \o{\Sigma} }^2_2} \\
        &\le d \sqrt{\frac{1}{n} \E\norm*{T_1\otimes T_1}^2_2} = d \sqrt{\frac{1}{n} \E \norm{T_1}^{4}_{\F}} \lesssim d \sigma_{T} v_T\sqrt{\frac{1}{n} \log\frac{v_T}{\sigma_T}}.
    \end{align*}

    Combining all the bounds, we get
    \begin{align*}
        \norm{\o{\Sigma}_{\mu} - \o{\Sigma}}_1 &\lesssim d \sigma_{T} v_T \sqrt{\frac{1}{n} \log\frac{v_T}{\sigma_T}} + \sigma_T v_T \sqrt{ \frac{\t}{n} \log\frac{v_T}{\sigma_T} } + v^2_T \log\frac{v_T}{\sigma_T} \frac{\t}{n} \\
        &\lesssim \sigma_T v_T \sqrt{ \frac{\t + d^2}{n} \log\frac{v_T}{\sigma_T} } . 
    \end{align*}
\end{proof}

\begin{lemma}[Gaussian approximation for $\bw(B, B_n)$]
\label{lemma:gar_1multiplier_boot}
    Denote 
    \[
    C_{\eps} \eqdef \kappa(B) \kappa(\o{F}) \left(c_B \sigma_T + \norm{\o{F}^{-1}} \sigma_F \right)^2, \quad
    N \eqdef \max\{ C_T, C_F \log d, C_\eps \log d\}.
    \]
    Let $n \gtrsim N$, then it holds that
    \[
    \mathcal{E} \lesssim d^3 \sqrt{\frac{C_G}{n}} + \gamma(\o{\Upxi}) \sqrt{\frac{C_\eps}{n} \log \frac{n d}{C_{\eps}}}
    \]
\end{lemma}

\begin{proof}
    Recall that the GAR bounding term is
    \begin{equation}
    \label{eq:gar_bound_again}
        \mathcal{E}  
        \lesssim \eps_{G} + \inf_{\x \in \mathscr{X}} \left\{e^{-\x} + \gamma(\oUpxi') \sqrt{\kappa(B)} \eps(\x) \right\},
        \quad
        \mathscr{X} \eqdef \left\{\x :~ \eps(\x) \le \frac{1}{6 \sqrt{\kappa(B)}} \right\}.
    \end{equation}

    We now recall that $\eps(\x) \eqdef 6 \sqrt{\kappa(\o{F})} \left( 
    c_B \eps_{T}(\x) + \eps_{\F}(\x) \right)$, with $c_B$ coming from~\eqref{def:cb}. Using Lemmata~\ref{prop:concentr_T} and~\ref{lemma:GAR_subg}, we get for any $x \ge 1$
    \[
    \eps(\x) \lesssim \sqrt{\kappa(\o{F})} \left(c_B\sigma_T \sqrt{\frac{\x}{n}} + \norm*{\o{F}^{-1}} \sigma_F \sqrt{\frac{\x + \log d}{n}}\right) \lesssim \sqrt{\frac{C_\eps}{\kappa(B) n} (\x + \log d)} .
    \]
    
    Taking $\x = \frac{1}{2} \log \frac{n}{C_\eps}$ and using assumption on $n$, we ensure that 
    \[
    \kappa(B) \eps^2(\x) \lesssim \frac{C_\eps}{n} \left(\log \frac{n}{C_\eps} + \log d\right) \lesssim 1.
    \]
    Thus, the condition $\x \in \mathscr{X}$ is satisfied. 
    Substituting $\eps_{G}$ from Lemma~\ref{lemma:GAR_subg} to \eqref{eq:gar_bound_again}, we get the result.
\end{proof}


Denote
\begin{gather*}
    \hat{C}_{\eps} \eqdef \kappa(B) \kappa(\o{F}) \left(c_B \sigma_T + \norm{\o{F}^{-1}} \sigma_F + 1\right)^2,
    \quad
    \hat{C}_{T} \eqdef  \kappa(B)\kappa^2(\o{F}), \\
    \hat{C}_G(\t) \eqdef \max\left\{(\t + \log d) K_T^2\log K_T, (\t + \log d)^{3/2}\left(\frac{K_T}{d}\right)^2 \right\}.
\end{gather*}

\begin{lemma}
\label{lemma:gar_2multiplier_boot}
    Let 
    \[
    \hat{N}(\t) \eqdef \max \left\{C_w C_T \t, C_w C_F \t \log d, \hat{C}_{\eps} (\t + \log d), \hat{C}_G(\t), \hat{C}_{T}\t
    \right\},
    \]
    then for $n\gtrsim \hat{N}(\t)$ with probability $1-\CONST e^{-\t}$
    \[
    \hat{\mathcal{E}}(\t) \lesssim \hat{\eps}_{G}(\t) + \gamma(\o{\Upxi}'_\mu) \left(1 + \sqrt{\frac{\tr(\o{\Upxi}')}{\tr( \o{\Upxi}'_\mu)}}\right) \sqrt{\frac{\hat{C}_{\eps}}{n} \left(\t + \log \frac{n d}{\hat{C}_{\eps}}\right)} .
    \]
\end{lemma}

\begin{proof}
    To get the bound on the random variable $\hat{\mathcal{E}}(\t)$, we note that Lemma~\ref{lemma:gar_boot_BW} ensures with probability $1-\CONST e^{-\t}$ 
    \begin{gather}
        \hat{\mathcal{E}}(\t) \lesssim \hat{\eps}_{G}(\t) + \inf_{\x \in \hat{\mathscr{X}}(\t)} \left\{ e^{-\x} + \gamma(\o{\Upxi}'_\mu) \sqrt{\kappa(B)} \left(\eps(\t) + \hat{\eps}(\x, \t)\right)
        \left( \tfrac{\norm{\A} \norm{ \o{F}^{-1}}}{\sqrt{\tr(\o{\Upxi}'_\mu)}} \eps_{T}(\t) + 1 \right)\right\},
    \end{gather}
    where
    $\hat{\mathscr{X}}(\t) \eqdef \left\{\x:~ \hat{\eps}(\x, \t) \le \frac{1}{12 \sqrt{ \kappa(B)}}\right\}$. 
    First, we use Lemmata~\ref{prop:concentr_T_boot} and~\ref{lemma:concentration_F_flat} and get
    \begin{align*}
       \hat{\eps}(\x, \t) & \eqdef 6 \sqrt{\kappa(\o{F})} \left( c_B\hat{\eps}_{T}(\x, \t) + \hat{\eps}_{F}(\x, \t)  \right) \\
       &\lesssim \sqrt{\kappa(\o{F})}\left( c_B\sigma_{T}\sqrt{\frac{\x}{n}} + (\sigma_F \norm{\o{F}^{-1}} + 1) \sqrt{\frac{\x + \t + \log d}{n}}\right) \\
       &\lesssim \sqrt{\frac{\hat{C}_{\eps}}{\kappa(B) n} (\x + \t + \log d)} . 
    \end{align*}

    Condition on $n$ yields
    \[
    \frac{\norm{\A} \norm{\o{F}^{-1}}}{\sqrt{\tr(\o{\Upxi}'_\mu)}} \eps_{T}(\t) 
    \lesssim \frac{\norm{\A} \norm{\o{F}^{-1}}}{\sqrt{\tr(\o{\Upxi}'_\mu)}} \sigma_T \sqrt{\frac{\t}{n}}
    \lesssim \frac{1}{\norm{\A^{-1}} \norm{\o{F}}} \sqrt{\frac{\tr \o{\Sigma}}{\tr(\o{\Upxi}'_\mu)}}
    \lesssim \sqrt{\frac{\tr(\o{\Upxi}')}{\tr(\o{\Upxi}'_\mu)}} .
    \]

    Next, according to the proof of Lemma~\ref{lemma:gar_1multiplier_boot}, $\eps(\t) \lesssim \sqrt{\frac{C_{\eps}}{\kappa(B) n} \t} \le \sqrt{\frac{\hat{C}_{\eps}}{\kappa(B) n} \t}$.
    
    Taking $\x = \frac{1}{2} \log \frac{n}{\hat{C}_\eps}$, we obtain that $\kappa(B) (\eps(\t) + \hat{\eps}(\x; \t))^2 \lesssim 
    \frac{\hat{C}_{\eps}}{n} (\x + \t + \log d) \lesssim 1,$ 
    hence $\x \in \hat{\mathscr{X}}(\t)$, and
    \[
    \hat{\mathcal{E}}(\t) \lesssim \hat{\eps}_{G}(\t) + \gamma(\o{\Upxi}'_\mu) \left(1 + \sqrt{\frac{\tr(\o{\Upxi}')}{\tr( \o{\Upxi}'_\mu)}}\right) \sqrt{\frac{\hat{C}_{\eps}}{n} \left(\t + \log \frac{n d}{\hat{C}_{\eps}}\right)} .
    \]
\end{proof}

Before proving the theorem, we collect some definitions used throughout the text below for completeness.
The constants from lemmata that ensure GAR,
\begin{gather}
    C_w \eqdef (v_w \log v_w)^2,
    \quad
    C_T \eqdef \frac{v^2_T}{\sigma^2_T} \log\frac{v_T}{\sigma_T},
    \\
    \hat{C}_{T} \eqdef \kappa(B)\kappa^2(\o{F}),
    \quad
    C_F \eqdef \frac{v^2_F}{\sigma^2_F} \log\frac{v_F}{\sigma_F}, \label{def:ct}\\
    K_T \eqdef \norm*{\norm*{\o{\Sigma}^{-1/2} T_i }_{\F}}_{\psi_2} \le \norm{\o{\Sigma^{-1/2}}} v_T,
    \quad
    C_{G}\eqdef \left(\frac{K_T}{d}\right)^2 \log \frac{K_T}{d}.\label{def:cg_hat}
\end{gather}
Moreover, the constants coming from Lemma~\ref{lemma:gar_1multiplier_boot} and Lemma~\ref{lemma:gar_2multiplier_boot}

\begin{gather}
    C_{\eps} = \kappa(B) \kappa(\o{F}) \left(c_B \sigma_T + \norm{\o{F}^{-1}} \sigma_F \right)^2,\\ 
    \hat{C}_{\eps} \eqdef \kappa(B) \kappa(\o{F}) \left(c_B \sigma_T + \norm{\o{F}^{-1}} \sigma_F + 1\right)^2, \label{def:ce_hat}\\
    \hat{C}_G(\t) \eqdef \max\left\{(\t + \log d) K_T^2\log K_T, (\t + \log d)^{3/2}\left(\frac{K_T}{d}\right)^2 \right\} \nonumber.
\end{gather}

In the following, we assume that
\begin{gather}
    n \gtrsim \max\{N, \hat{N}(\t), \t C_T\}, \quad
    N \eqdef \max\{ C_T, C_F \log d, C_\eps \log d\},\label{def:size_n}\\
    \hat{N}(\t) \eqdef \max \left\{C_w C_T \t, C_w C_F \t \log d, \hat{C}_{\eps} (\t + \log d), \hat{C}_G(\t), \hat{C}_{T}\t
    \right\}.\nonumber
\end{gather}

\begin{proof}[Proof of Theorem~\ref{thm:bootstrap_bw}]
    If $W$ is such that $\P_{w}\{w =0\} = 0$, the proof is trivial and reduces to validation of all assumptions in Theorem~\ref{theorem:boot_BW}.
    
    Now we consider the weight generating law $W$, such that $\P_{w}\{w =0\} = p_0$. Let an auxiliary measure $\tilde{\mu}$ be
    \[
    \tilde{\mu} = \sumi w_i\delta_{S_i},~~\text{s.t.}~\sumi w_i \neq 0,
    \]
    and set, w.l.o.g., $\mathscr{B}(0) \eqdef B_0$ with $B_0 \in \H_{++}(d)$ being some fixed matrix.
    
    We aim to show that
    \begin{equation}
    \label{eq:aux_bound1}
        \left|\P\left\{\bw(B_{\tilde{\mu}}, B_{\mu} ) \le z | \mu   \right\} - \P\left\{  \bw(B_{\hat{\mu}}, B_{\mu} )  \le z | \mu \right\}   \right| \le p^n_0.
    \end{equation}
    
    We will use the following facts:
    \begin{align*}
        &\P\left\{A|B \right\} - \P\left\{A \right\} = \frac{\P\left\{A \cap B \right\}}{\P\left\{B \right\}} - \P\left\{A \right\} \\
        &\qquad\le \P\left\{A \right\}
        +\left(\frac{1}{\P\left\{ B\right\}} - 1\right) \P\left\{ B\right\}  - \P\left\{ A\right\} \le 1 - \P\left\{ B\right\},\\
        &\P\left\{A \right\} - \P\left\{A|B \right\} \le \P\left\{A \right\} - \P\left\{A \cap B \right\} \le 1 - \P\left\{B \right\} .
    \end{align*}
    
    Thus, for a fixed set $S_1, \dots, S_n$,
    \[
    \left| \P_{w }\left\{\bw(B_{\hat{\mu}}, B_{\mu}) \le z \biggm| \sumi w_i \neq 0 \right\} - \P_{w }\left\{\bw(B_{\tilde{\mu}}, B_{\mu}) \le z  \right\} \right| \le \P_{w }\left\{\sumi w_i= 0\right\} = p^n_0.
    \]
    
    Now, we notice that the condition $\sumi w_i = 0$ is equivalent to $\hat{\mu} = 0$. Thus, \eqref{eq:aux_bound1} follows from
    \begin{align*}
        \Bigl|\P\left\{\bw(B_{\tilde{\mu}}, B_{\mu} ) \le z \middle| \mu\right\} &- \P\left\{\bw(B_{\hat{\mu}}, B_{\mu}) \le z \middle| \mu \right\} \Bigr| \\
        &= \left|\P\left\{\bw(B_{\tilde{\mu}}, B_{\mu} ) \le z \middle| \mu \right\} - \P\left\{  \bw(B_{\tilde{\mu}}, B_{\mu}) \le z \middle| \mu, \hat{\mu} \neq 0 \right\} \right|\\
        &\le \P\left\{ \hat{\mu} = 0 \middle| \mu \right\} = \P_{w}\left\{\sumi w_i = 0\right\} = p^n_0.
    \end{align*}
    
    Further, Lemma~\ref{lemma:gar_boot_BW}, being applied to $\tilde{\mu}$ (instead of $\hat{\mu}$) together with the above bound, yields for all $z > 0$
    \[
    \abs*{
    \P\left\{\bw(B_{\hat{\mu}}, B_{\mu}) \le z ~|~\mu\right\} - 
    \P\left\{\norm{\A Z_{\mu}}_{\F} \le z ~|~\mu\right\}
    }
    \le \hat{\mathcal{E}}(\t) + p^n_0.
    \]
    Thus, the resulting bound is written as
    \[
    \sup_{z\ge 0} \abs*{
     \P\left\{\bw(B_{\mu},B) \le z\right\} 
     - \P\left\{ \bw(B_{\hat{\mu}},B_{\mu}) \le z ~|~\mu\right\}} \leq \Gamma(\t) + p^n_0.
    \]
    
    Finally, to get the asymptotic bound on $\Gamma(\t) + p^n_0$ for large $n$, we summarize all auxiliary results from this section. 
    
    To get the second result, we recall Theorem~\ref{theorem:boot_BW} and notice that
    \[
    \Gamma(\t) \lesssim \varkappa( \oUpxi')\norm{\A}^2 \eps_{\Upxi}(\t) + \mathcal{E} + \hat{\mathcal{E}}(\t).
    \]
    First, we recall Lemma~\ref{lemma:gar_2multiplier_boot},
    \[
    \hat{\mathcal{E}}(\t) \lesssim \hat{\eps}_{G}(\t) + \gamma(\o{\Upxi}'_\mu) \left(1 + \sqrt{\frac{\tr(\o{\Upxi}')}{\tr( \o{\Upxi}'_\mu)}}\right) \sqrt{\frac{\hat{C}_{\eps}}{n} \left(\t + \log \frac{n d}{\hat{C}_{\eps}}\right)}
    \]
    Assumption on $n$ ensures $\gamma(\o{\Upxi}'_\mu)\lesssim \gamma(\o{\Upxi}')$, $\tr (\o{\Upxi}'_\mu) \lesssim \tr \gamma(\o{\Upxi}')$ (see \ref{eq:yet_another_name}). Using Lemmata~\ref{lemma:gar_1multiplier_boot}, \ref{lemma:gar_2multiplier_boot}, \ref{lemma:concentration_sigma_n}, and the fact that by definition $\hat{C}_\eps > C_\eps$, we get
    \[
    \Gamma(\t) \lesssim d^3\sqrt{\frac{C_G}{n}} + \gamma(\oUpxi')\sqrt{\frac{\hat{C}_\eps}{n}(\t + \log \frac{nd}{\hat{C}_\eps})} + \varkappa(\oUpxi')\norm*{\A}^2\norm{\o{F}^{-1}}^2\sigma^2_T\sqrt{\frac{C_T}{n} (\t + d^2)}.
    \]
    Finally, 
    $\gamma(\oUpxi') = \varkappa(\oUpxi')\tr\oUpxi' \le \varkappa(\oUpxi') \norm*{\A}^2\norm{\o{F}^{-1}}^2 \tr \o{\Sigma} = \varkappa(\oUpxi') \norm*{\A}^2\norm{\o{F}^{-1}}^2 \sigma^2_T.$  Combining the bounds, we get the result.
\end{proof}

\section{Computational aspects}
\label{appx:computation}

\begin{lemma}\label{lemma:bound_on_lambda}
    Let $Q_0 \in \H_{++}(d)$, $Q_1 = f_\mu(Q_0)$. Denote
    \[
    A_\mu \eqdef \left(\int\limits_{\H_{++}(d)} \lmax^{1/2}(S) \dd \mu(S) \right)^2,
    \quad
    a_\mu \eqdef \left(\int\limits_{\H_{++}(d)} \lmin^{1/2}(S) \dd \mu(S) \right)^2,
    \quad
    \kappa_{\mu} \eqdef \frac{A_\mu}{a_\mu}.
    \]
    Then
    \begin{align*}
        \lmax(Q_1) \le A_{\mu} , \quad
        \lmin(Q_{1}) \ge \frac{a_{\mu}}{\sqrt{\kappa(Q_0)}} ,
        \quad \kappa(Q_1) \le \kappa_\mu \sqrt{\kappa(Q_0)} .
    \end{align*}
\end{lemma}

\begin{proof}
    First, recall that the original algorithm is written as
    \begin{equation}
        f_\mu(Q) = Q^{-1/2} \left [\int\limits_{\H_{++}(d)} \left(Q^{1/2} S  Q^{1/2}\right)^{1/2} \dd \mu(S) \right]^2 Q^{-1/2}, \quad Q \in \H_{++}(d),
    \end{equation}
    see \cite{alvarez2016fixed}.
    Denote $R(Q) \eqdef \left(Q^{1/2} S  Q^{1/2}\right)^{1/2}$. Jensen's inequality yields
    \begin{align*}
        \lmax(Q_1) &= \norm*{Q^{-1/2}_0 \int\limits_{\H_{++}(d)} R(Q_0) \dd \mu(S) }^2 \le 
        \left(\int\limits_{\H_{++}(d)} \norm*{Q^{-1/2}_0  R(Q_0)} \dd \mu(S)\right)^2 = A_k,
    \end{align*}
    since 
    \[
    \norm*{Q^{-1/2}_0  R(Q_0)}^2
    = \lmax\left(Q^{-1/2}_0  R^2(Q_0) Q^{-1/2}_0\right) = 
    \lmax(S) .
    \]

    Next, we notice that by construction
    \begin{align*}
        Q_1 &\succcurlyeq \lmin\left(\int\limits_{\H_{++}(d)} R(Q_0) \dd \mu(S) \right) Q^{-1/2}_0 \left[ \int\limits_{\H_{++}(d)} R(Q_0) \dd \mu(S) \right]Q^{-1/2}_0\\
        &\succcurlyeq \lmin\left(\int\limits_{\H_{++}(d)} R(Q_0) \dd \mu(S) \right) Q^{-1/2}_0 \left[\int\limits_{\H_{++}(d)}Q^{1/2}_0\lmin^{1/2}(S) \dd \mu(S)\right]  Q^{-1/2}_0\\
        &\succcurlyeq \lmin^{1/2}(Q_0)a_{\mu} Q^{-1/2}_0.
    \end{align*}
    Consequently,
    \[
    \lmin(Q_1) \ge \frac{\lmin^{1/2}(Q_0)}  {\lmax^{1/2}(Q_0)} a_\mu = \frac{1}{\sqrt{\kappa(Q_0)}}a_\mu. 
    \]
    Finally, this ensures
    $\kappa(Q_1) \le \sqrt{\kappa(Q_0)} \kappa_\mu$.
\end{proof}

\begin{proof}[Proof of Theorem~\ref{thm:complexity}]
    Let $Q_{t} = tQ + (1-t)B_{\mu}$ with $t\in [0, 1]$, where $B_{\mu}$ is the barycenter w.r.t.\ $\mu$. We set
    \begin{equation*}
        \o{D}_{\mu}[Q] \eqdef - \underset{\H_{++}(d)}{\int} \left[ \int\limits_{0}^{1} \dT{Q_t}{S}\dd t\right] \dd \mu(S).
    \end{equation*}
    Lemma~A.6 by \cite{kroshnin2019statistical} ensures that 
    \begin{equation}
    \label{eq:aux_for_frob_norm}
        \V_\mu(Q_k) -  \V(B_\mu) \le  
        \left\langle \o{D}_\mu[Q_k]\left(Q_k - B_\mu\right), Q_k - B_\mu
        \right\rangle.
    \end{equation}
    The Taylor formula for $\mathscr{B}$ (see Theorem~2.2 by~\citet{kroshnin2019statistical}) yields
    $
    B_\mu - Q_k = \o{D}^{-1}_{\mu}[Q_k] T_\mu[Q_k].
    $
    Thus, 
    \begin{equation}
    \label{eq:bound_V_k}
        \V_\mu(Q_k) -  \V(B_\mu) \le 
        \left\langle T_\mu[Q_k], \o{D}^{-1}_{\mu}[Q_k] T_\mu[Q_k]
        \right\rangle.
    \end{equation}
    Denote
    \begin{gather*}
        \o{d}_{\mu}(X) \eqdef Q^{1/2}_k \o{D}^{-1}_{\mu}[Q_k] \left(Q^{1/2}_k  X Q^{1/2}_k \right) Q^{1/2}_k, \\
        \o{D}^{-1}_\mu(Y) = Q^{1/2}_k \o{d}^{-1}_{\mu}(Q^{1/2}_kY Q^{1/2}_k)Q^{1/2}_k 
    \end{gather*}
    Next, for the sake of clarity, we will write $\o{D}_\mu$ instead of $\o{D}_\mu[Q_k]$ and $T_\mu$ instead of $T_\mu[Q_k]$, 
    \begin{align*}
        \left \langle
        \o{D}^{-1}_\mu T_\mu, T_\mu\right\rangle
        &= \left \langle
        Q^{1/2}_k T_\mu Q^{1/2}_k,
        \o{d}^{-1}_\mu\left(Q^{1/2}_k T_\mu Q^{1/2}_k \right)
        \right\rangle \\
        &\le \frac{1}{\lmin(\o{d}_{\mu})}\norm*{Q^{1/2}_k T_\mu Q^{1/2}_k}_\F
        \le \frac{\lmax(Q_k)}{\lmin(\o{d}_{\mu})} \norm*{Q^{1/2}_k T_\mu}^2_\F.
    \end{align*}
    Set $Q'_k \eqdef B^{1/2}_\mu Q_k B^{1/2}_\mu$. Next, we notice that (see Corollary~A.2 in \cite{kroshnin2019statistical})
    \[
    \lmin(\o{d}_\mu) \ge \frac{2 \lmin^{1/2}(Q_k) a^{1/2}_\mu}{\lmax^{1/2}(Q'_k) + \lmax(Q'_k)}.
    \]
    Next, we notice that Lemma~\ref{lemma:bound_on_lambda} ensures $\lmax(Q'_k) \le \frac{\lmax(B_\mu)}{\lmin(Q_k)} \le \frac{A_{\mu}}{a_\mu / \kappa_\mu} = \kappa_\mu.$ Thus, 
    \[
    \lmin(\o{d}_\mu) \ge \frac{\lmin^{1/2}(Q_k) a^{1/2}_\mu}{\kappa^2_\mu} \ge \frac{a_\mu}{\kappa^{5/2}_\mu}.
    \]
    Consequently,
    $\left\langle \o{D}^{-1}_\mu T_\mu, T_\mu \right\rangle 
    \le A_\mu \frac{\kappa^{5/2}_\mu}{a_\mu} 
    \left\langle T_\mu, Q_k T_\mu \right\rangle
    = \kappa^{7/2}_\mu \left\langle T_\mu, Q_k T_\mu \right\rangle$.
    Thus, \eqref{eq:bound_V_k} can be rewritten as
    \[
    \V_\mu(Q_k) -  \V(B_\mu) \le \kappa^{7/2}_\mu\left  \langle
    T_\mu, Q_k T_\mu
    \right \rangle.
    \]
    Next, we note that (see, e.g., Proposition~3.3 by \cite{alvarez2016fixed} )
    \begin{equation}\label{eq:bound_V_k+1}
        \V_\mu(Q_{k+1}) - \V_\mu(B_\mu) \le \V_\mu(Q_{k}) - \V_\mu(B_\mu) - \bw^2(Q_{k+1}, Q_k).
    \end{equation}
    It is easy to see (e.g., eq.~(23) in \cite{alvarez2016fixed}) that
    \begin{equation*}
        \bw^2(Q_{k+1}, Q_{k}) = \tr(Q_k) + \tr(Q_{k+1}) - 2\tr(Q_k \E_{S\sim \mu} T^{S}_{Q_k}) 
        = \bigl \langle
         T_{\mu}, Q_k T_{\mu}
        \bigr \rangle.
    \end{equation*}
    Thus, 
    \begin{align*}
        \V_\mu( Q_{k+1}) - \V_\mu(B_\mu)& \le (1-\kappa^{-7/2})\left(\V_\mu(Q_{k}) - \V_\mu(B_\mu)   \right) \\
        &\le (1-\kappa^{-7/2})^k_\mu\left(\V_\mu(Q_{0}) - \V_\mu(B_\mu)   \right).
    \end{align*}
    Recall, that $\o{F}_{\mu}\eqdef -\underset{\H_{++}(d)}{\int} \dT{B_\mu}{S}\dd \mu(S)$. 
    Lemma~A.6 from \cite{kroshnin2019statistical} ensures that
    \begin{align*}
        \V_{\mu}(Q_k) - \V_{\mu}(B_{\mu}) &\ge \frac{1}{\left(1 + \lmax^{1/2}(Q'_k) \right)^2} \left\langle 
        \o{F}_{\mu}\left(Q_k - B_\mu\right), Q_k - B_\mu
        \right\rangle  \\
        &\ge \frac{\lmin(\o{F}_\mu)}{\left(1 + \lmax^{1/2}(Q'_k) \right)^2} \norm*{Q_k - B_\mu}^2_{\F}.
    \end{align*}
    Thus, 
    \begin{align*}
        \norm*{Q_k - B_\mu}_{\F} &\le  \frac{\left(1 + \lmax^{1/2}(Q'_k) \right)}{\lmin^{1/2}(\o{F}_\mu)} \left(\V_{\mu}(Q_k) - \V_{\mu}(B_\mu)\right) \\ 
        &\le \frac{2\kappa^{1/2}_\mu }{\lmin(\o{F}_{\mu})} (1-\kappa^{-7/2}_\mu)^{k/2}\left(\V_\mu(Q_{0}) - \V_\mu(B_\mu) \right)^{1/2}. 
    \end{align*}
    Using the triangle inequality so that to bound $\norm*{Q_{k+1} - Q_{k}}_\F$, we get that for the given precision $\eps>0$ it is enough to make $k \ge N$ steps with
    \[
    N = 2\kappa^{7/2}_\mu \ln
    \left(
    \frac{1}{\eps}\cdot \frac{2\kappa^{1/2}_\mu}{\lmin(\o{F}_{\mu})}\left(\V_\mu(Q_{0}) - \V_\mu(B_\mu) \right)^{1/2}  
    \right)
    \]
\end{proof}

\end{appendix}



\end{document}